\title{\vspace{-40pt}Equivariant group presentations and the second homology group of the Torelli group}
\author{Martin Kassabov\thanks{Supported in part by NSF grant DMS-1601406} \and Andrew Putman\thanks{Supported in part by NSF grants DMS-1737434 and DMS-1811322.}}
\date{}
\DeclareFontFamily{OMX}{MnSymbolE}{}
\DeclareSymbolFont{MnLargeSymbols}{OMX}{MnSymbolE}{m}{n}
\DeclareFontShape{OMX}{MnSymbolE}{m}{n}{
    <-6>  MnSymbolE5
   <6-7>  MnSymbolE6
   <7-8>  MnSymbolE7
   <8-9>  MnSymbolE8
   <9-10> MnSymbolE9
  <10-12> MnSymbolE10
  <12->   MnSymbolE12
}{}
\DeclareFontShape{OMX}{MnSymbolE}{b}{n}{
    <-6>  MnSymbolE-Bold5
   <6-7>  MnSymbolE-Bold6
   <7-8>  MnSymbolE-Bold7
   <8-9>  MnSymbolE-Bold8
   <9-10> MnSymbolE-Bold9
  <10-12> MnSymbolE-Bold10
  <12->   MnSymbolE-Bold12
}{}
\let\llangle\@undefined
\let\rrangle\@undefined
\DeclareMathDelimiter{\llangle}{\mathopen}%
                     {MnLargeSymbols}{'164}{MnLargeSymbols}{'164}
\DeclareMathDelimiter{\rrangle}{\mathclose}%
                     {MnLargeSymbols}{'171}{MnLargeSymbols}{'171}
\apptocmd{\thebibliography}{\raggedright}{}{}
\numberwithin{equation}{section}
\theoremstyle{plain}
\newtheorem{theorem}{Theorem}[section]
\newtheorem{maintheorem}{Theorem}
\newtheorem{lemma}[theorem]{Lemma}
\newtheorem{claimsa}{Claim}[section]
\newtheorem{claimsb}{Claim}[section]
\newtheorem*{case}{Case}
\theoremstyle{definition}
\newtheorem{defn}[theorem]{Definition}
\newenvironment{definition}[1][]{\begin{defn}[#1]\pushQED{\qed}}{\popQED \end{defn}}
\theoremstyle{remark}
\newtheorem{rmk}[theorem]{Remark}
\newenvironment{remark}[1][]{\begin{rmk}[#1] \pushQED{\qed}}{\popQED \end{rmk}}
\newtheorem{eg}[theorem]{Example}
\newenvironment{example}[1][]{\begin{eg}[#1] \pushQED{\qed}}{\popQED \end{eg}}
\DeclareMathOperator{\Image}{Im}
\DeclareMathOperator{\Mod}{Mod}
\newcommand\Torelli{\ensuremath{{\mathcal I}}}
\DeclareMathOperator{\Sp}{Sp}
\newcommand\Z{\ensuremath{\mathbb{Z}}}
\newcommand\Q{\ensuremath{\mathbb{Q}}}
\newcommand\N{\ensuremath{\mathbb{N}}}
\DeclareMathOperator{\HH}{H}
\DeclareMathOperator{\Tor}{Tor}
\DeclareMathOperator{\Aut}{Aut}
\newcommand\Present[2]{\ensuremath{\langle \text{#1 $|$ #2} \rangle}}
\newcommand\Set[2]{\ensuremath{\{\text{#1 $|$ #2}\}}}
\newcommand\Norm[1]{\ensuremath{\llangle #1 \rrangle}}
\newcommand\Figure[4]{
\begin{figure}[t]
\centering
\centerline{\psfig{file=#2,scale=#4}}
\caption{#3}
\label{#1}
\end{figure}}
\DeclareMathOperator{\FP}{FP}
\DeclareMathOperator{\tor}{tor}
\DeclareMathOperator{\rank}{rank}
\DeclareMathOperator{\conj}{conj}
\newcommand\Sym{\ensuremath{\mathfrak{S}}}
\newcommand\tM{\ensuremath{\widetilde{M}}}
\newcommand{\p}[1]{{\bf #1.}}
\begin{document}

\vspace{-10pt}
\maketitle

\vspace{-24pt}
\begin{abstract}
We develop a theory of equivariant group presentations and relate them to the
second homology group of a group.  Our main application says that
the second homology group of the Torelli subgroup of the mapping class group
is finitely generated as a $\Z[\Sp_{2g}(\Z)]$-module.
\end{abstract}

\section{Introduction}
\label{section:introduction}

If $G$ is a finitely presentable group, then $\HH_2(G)$ is a finitely generated abelian
group.  Recent work on representation stability has focused
on groups that do not satisfy finiteness conditions like finite presentability, but where
the lack of finiteness is explained by the action of a larger group.  In this spirit,
for a group $G$ acted upon by a group $\Gamma$, we
introduce ``finite
$\Gamma$-equivariant presentations'' for $G$ and show that in many situations, having
such a presentation implies that $\HH_2(G)$ is finitely generated as a $\Z[\Gamma]$-module.
As an application, we prove a conjecture of Church--Farb about the second homology group
of the Torelli group.

\p{Torelli group}
Let $\Sigma_g^b$ be a compact oriented genus $g$ surface with $b$ boundary components and let $\Mod_g^b$ be
its mapping class group, i.e.\ the group of isotopy classes of orientation-preserving diffeomorphisms
$f$ of $\Sigma_g^b$ with $f|_{\partial \Sigma_g^b} = \text{id}$.  For $b \in \{0,1\}$, Poincar\'{e} duality implies
that the algebraic intersection form on $\HH_1(\Sigma_g^b;\Z)$ is a $\Mod_g^b$-invariant symplectic form.
The $\Mod_g^b$-action on $\HH_1(\Sigma_g^b;\Z)$ thus gives a representation
$\Mod_g^b \rightarrow \Sp_{2g}(\Z)$ whose kernel $\Torelli_g^b$ is the {\em Torelli group}.
This is summarized in the short exact sequence
\[1 \longrightarrow \Torelli_g^b \longrightarrow \Mod_g^b \longrightarrow \Sp_{2g}(\Z) \longrightarrow 1.\]
See \cite{JohnsonSurvey, FarbMargalitPrimer, PutmanSurvey} for surveys about the mapping class group and Torelli group.\footnote{Defining the Torelli group on a surface with multiple
boundary components is subtle and there are several possibilities; see 
\cite{PutmanCutPaste}.  We will thus restrict ourselves to surfaces with at most
$1$ boundary component.}

\p{Combinatorial group theory}
The group $\Mod_g^b$ has strong finiteness properties.  For instance,
it is finitely presentable \cite{McCoolFinite} and
all of its homology groups are finitely generated \cite{HarerFinite}.  
Since $\Torelli_g^b$ is an infinite-index subgroup of $\Mod_g^b$, there is no formal reason for it to inherit any of these
finiteness properties.  Indeed, McCullough--Miller \cite{McCulloughMiller} proved that $\Torelli_2^b$ 
is not even finitely generated.  Mess \cite{Mess} strengthened this by showing that
$\Torelli_2$ is an infinite rank free group.  
However, a remarkable theorem of Johnson \cite{JohnsonFinite} says
that $\Torelli_g^b$ is finitely generated for $g \geq 3$.  This has been strengthened in various ways; for
instance, more efficient generating sets can be found in \cite{PutmanSmallGenset} and generation
results for deeper subgroups can be found in \cite{ChurchPutman, ChurchErshovPutman, ErshovHe}.  However, it is
not known whether $\Torelli_g^b$ is finitely presentable for $g \geq 3$.

\p{Homology}
Even the easier question of whether $\HH_2(\Torelli_g^b)$ is finitely generated 
is open (though large pieces of it have been calculated by Hain \cite{HainInfinitesimal}
and by Brendle--Farb \cite{BrendleFarb}), so it is natural to study weaker finiteness properties.
The conjugation action of $\Mod_g^b$ on $\Torelli_g^b$ descends to an action of $\Sp_{2g}(\Z)$ on 
$\HH_k(\Torelli_g^b)$.  Church--Farb \cite{ChurchFarbRepStability} made a series
of conjectures about this action which 
assert that it exhibits various forms of ``representation stability''.  Precisely stating all
of their conjectures would take us too far afield, so we will only do so for the one that we prove.
 
The group $\HH_1(\Torelli_g^b)$ was calculated
by Johnson \cite{JohnsonAbel}, and this calculation shows that all of Church--Farb's conjectures hold for it.
All subsequent work has focused on $\HH_2(\Torelli_g^b)$.  Boldsen--Dollerup \cite{BoldsenDollerup}
proved that $\HH_2(\Torelli_g^1;\Q)$ satisfies a regularity condition introduced by
Church--Farb called ``surjective representation stability''.  Miller--Patzt--Wilson \cite{MillerPatztWilson}
strengthened this by showing that $\HH_2(\Torelli_g^1;\Q)$ is ``centrally stable'' in the sense
of \cite{PutmanSam} (which generalizes to groups like $\Sp_{2g}(\Z)$ ideas from \cite{PutmanCongruence}
concerning the symmetric group).

Church--Farb also conjectured that $\HH_k(\Torelli_g^b)$
is finitely generated as a $\Z[\Sp_{2g}(\Z)]$-module for $g \gg 0$.  In other words,
there exists a finite set $S \subset \HH_k(\Torelli_g^b)$ such
that the $\Sp_{2g}(\Z)$-orbit of $S$ spans $\HH_k(\Torelli_g^b)$.  Our first
main theorem verifies this for $k=2$.
 
\begin{maintheorem}
\label{theorem:mainfg}
$\HH_2(\Torelli_g^b)$ is finitely generated as a $\Z[\Sp_{2g}(\Z)]$-module for $g \geq 3$ and $b \in \{0,1\}$.
\end{maintheorem}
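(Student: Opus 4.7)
The plan follows the bipartite strategy signalled by the abstract. First, I would develop an abstract machine that, given a group $G$ equipped with an action of a group $\Gamma$ and a ``finite $\Gamma$-equivariant presentation,'' produces finite generation of $\HH_2(G)$ as a $\Gamma$-module. Second, I would build an explicit finite $\Mod_g^b$-equivariant presentation of $\Torelli_g^b$ and feed it into the machine, using that the $\Mod_g^b$-action on $\HH_2(\Torelli_g^b)$ descends to $\Sp_{2g}(\Z)$ because $\Torelli_g^b$ acts trivially on its own homology.

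\textbf{General machinery.} I would model a $\Gamma$-equivariant presentation as a $\Gamma$-equivariant surjection $F \twoheadrightarrow G$, where $F$ is the free group on a $\Gamma$-set $S$ with finitely many $\Gamma$-orbits, and the kernel $R$ is $\Gamma$-normally generated by a subset with finitely many $\Gamma$-orbits. Via Hopf's formula $\HH_2(G)\cong (R\cap[F,F])/[F,R]$, the quotient is naturally a $\Gamma$-module, and one wants to deduce finite generation over $\Z[\Gamma]$ from the finite orbit data on $S$ and $R$. The subtlety is that $F$ is free on an \emph{infinite} set, so the standard finite-presentation argument does not transplant verbatim; one must instead cut $R \cap [F,F]$ with commutators $[F,R]$ carefully, probably by a Reidemeister--Schreier style argument relative to $\Gamma$, and impose a hypothesis that $\Gamma$ itself is finitely generated (or finitely presented) to close the loop.

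\textbf{Equivariant presentation of $\Torelli_g^b$.} For generators this is classical: Johnson's theorem gives a single $\Mod_g^b$-orbit of bounding pair maps generating $\Torelli_g^b$ for $g\geq 3$. For relations I would set up an action of $\Torelli_g^b$ on a suitable simplicial complex $X$ on $\Sigma_g^b$ --- built from nonseparating curves, bounding pairs, or a related combinatorial object --- which is simultaneously acted on by $\Mod_g^b$ extending the $\Torelli_g^b$-action, and then apply a Brown-style presentation theorem for groups acting on simply connected complexes. For this to produce a \emph{finite equivariant} presentation one needs: (i) $X$ is simply connected; (ii) $\Mod_g^b$ has finitely many orbits on $0$- and $1$-cells; (iii) the $\Torelli_g^b$-stabilizers of vertices and edges are themselves finitely equivariantly presented, compatibly with $\Mod_g^b$. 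Point (iii) is the lever for an induction on $g$: each stabilizer should decompose into Torelli-type groups on subsurfaces of strictly smaller genus together with pieces whose equivariant presentations are understood.

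\textbf{Main obstacle.} I expect the hard part to be the combinatorial topology needed to realize (i)--(iii) simultaneously. Finding a complex $X$ whose simple connectivity can be proved by a curve-surgery argument and whose stabilizers are simple enough to be finitely equivariantly presented --- while still admitting only finitely many $\Mod_g^b$-orbits of low-dimensional cells --- is the central technical problem, and the whole edifice of the paper presumably rests on a carefully chosen such $X$ together with an induction on genus that threads equivariant presentations of subsurface Torelli groups through the stabilizer calculation.
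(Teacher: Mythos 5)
Your overall architecture (an abstract ``finite equivariant presentation implies $\HH_2$ finitely generated'' machine, plus a construction of such a presentation for $\Torelli_g^b$) matches the paper, but there is a concrete gap in the second half that your plan does not address and that is in fact the paper's central new difficulty. Any Brown/Putman-style presentation theorem for a group acting on a simply connected complex produces, besides relations supported on finitely many orbits of cells, the full family of \emph{conjugation relations} $s t s^{-1} = \leftidx{^s}{t}$ for all pairs of generators $s,t$. Because there is no bound on how the curves underlying $s$ and $t$ intersect, these relations do \emph{not} fall into finitely many $\Mod_g^b$-orbits: this is exactly what happens with Putman's infinite presentation (relations (F.6)--(F.8)), and it is why that presentation is only ``weakly finite'' in the paper's terminology. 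Your step (i)--(iii) would therefore deliver a weakly-finite equivariant presentation at best, and your proposal contains no mechanism for converting it into a finite one. The paper's key new ingredient here is Theorem \ref{theorem:deweak}: when $G \lhd \Gamma$ with both $G$ and $\Gamma$ finitely generated, all conjugation relations are consequences of finitely many orbits of them, namely those recording (a) how each element of a finite symmetric generating set of $\Gamma$ conjugates each member of a finite generating set $X \subset S$ of $G$ back into $F(X)$, and (b) how each orbit representative $s_0 \in S_0$ conjugates each $x \in X$. An induction on word length in $\Gamma$ then recovers the general relation $s t s^{-1} = \leftidx{^s}{t}$. Without an analogue of this step your argument stalls. (Relatedly, your stabilizer condition (iii) and the induction on genus are much heavier than what is needed: the paper never inducts on genus, and only needs the $\Mod_g^b$-centralizers of the finitely many generators to be finitely generated, which is known.)

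On the abstract machine, your direction is right but the hypotheses and mechanism are left vague where the paper is precise. The paper assumes $\Gamma$ is of type $\FP_2$, that $\HH_1(G)$ is finitely generated as an abelian group, and that the $\Gamma$-stabilizers of the finitely many generator-orbit representatives are finitely generated; it then runs the five-term exact sequence
\[0 \longrightarrow \HH_2(G) \longrightarrow \bigl(\HH_1(\Norm{R})\bigr)_G \longrightarrow \HH_1(F(S)) \longrightarrow \HH_1(G) \longrightarrow 0\]
as a sequence of $\Z[\Gamma]$-modules. The decisive observations are that $\bigl(\HH_1(\Norm{R})\bigr)_G$ is finitely generated over $\Z[\Gamma]$ because passing to $G$-coinvariants collapses all $G$-conjugates of each relator onto the images of the finitely many orbit representatives in $R_0$, that $\HH_1(F(S)) \cong \bigoplus_{s \in S_0} \Z[\Gamma/\Gamma_s]$ is of type $\FP_1$ by the stabilizer hypothesis, and that $\HH_1(G)$ is of type $\FP_2$ over $\Z[\Gamma]$ since it is a finitely generated abelian group and $\Gamma$ is of type $\FP_2$; the extension lemmas for $\FP_n$ then squeeze out $\FP_0$ for $\HH_2(G)$. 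Your proposed ``Reidemeister--Schreier style argument relative to $\Gamma$'' does not identify this mechanism, and ``$\Gamma$ finitely generated or finitely presented'' is not quite the right hypothesis. These are fixable with more work, but as written the proposal is a plausible outline rather than a proof, and the conjugation-relation issue is a genuine missing idea.
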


\begin{remark}
Day--Putman \cite{DayPutmanH2IA} proved an analogue of Theorem \ref{theorem:mainfg} for the
Torelli subgroup of $\Aut(F_n)$.  Our proof of Theorem \ref{theorem:mainfg} shares some features
with \cite{DayPutmanH2IA}; for instance, both start with infinite presentations
for the groups in question.  However, the proofs are fundamentally different.  For instance, \cite{DayPutmanH2IA}
identifies an explicit generating set for $\HH_2$, while our proof of Theorem \ref{theorem:mainfg} is
inherently non-constructive.  
\end{remark}

\p{Equivariant presentations}
For a group $G$, the group $\HH_2(G)$ is connected to the relations in a presentation for $G$.
We derive Theorem \ref{theorem:mainfg} from a special kind of presentation for $\Torelli_g^b$
that incorporates the action of $\Mod_g^b$.  For a set $S$, let $F(S)$ be the free group on $S$.

\begin{definition}
Let $G$ and $\Gamma$ be groups such that $\Gamma$ acts on $G$.  
A {\em finite $\Gamma$-equivariant presentation} for $G$ consists of a pair $(S_0,R_0)$ as follows:
\vspace{-\baselineskip}
\begin{compactitem}
\item $S_0 \subset G$ is a finite set whose orbits $S \coloneqq \Gamma \cdot S_0$ generate $G$.
\item $R_0 \subset F(S)$ is a finite set of relations for $G$ whose orbits
$R \coloneqq \Gamma \cdot R_0$ form a complete set of relations for $G$.  Here $\Gamma$ acts on $F(S)$ via its action on $S$.\qedhere
\end{compactitem}
\end{definition}

\begin{example}
Let 
\[G = \bigoplus_{n \in \N} \Z/2\] 
and let $\Gamma$ be the symmetric group on the set $\N$.  The group $\Gamma$ acts on $G$ via its action on $\N$.
For $n \in \N$, let $s_n \in G$ be the generator of the $n^{\text{th}}$ summand.  Then $G$ has a finite
$\Gamma$-equivariant presentation $(S_0,R_0)$ with $S_0 = \{s_1\}$ and $R_0 = \{s_1^2, [s_1,s_2]\}$.
\end{example}

\begin{example}
For some $n \geq 3$, let $\Sym_{n+1}$ be the symmetric group on $(n+1)$ letters $\{1,\ldots,n+1\}$ and
let $\Gamma \subset \Sym_{n+1}$ be any subgroup acting $3$-transitively on $\{1,\ldots,n\}$
and fixing $n+1$.  The group $\Gamma$ acts on $\Sym_{n+1}$ by conjugation.
For $1 \leq i \leq n$, let $s_i \in \Sym_{n+1}$ be the transposition
$(i,n+1)$.  Then $\Sym_{n+1}$ has a finite $\Gamma$-equivariant presentation
$(S_0,R_0)$ with $S_0 = \{s_1\}$ and 
$R_0 = \{s_1^2, (s_1 s_2)^3, (s_1 s_2 s_3)^4\}$.  Indeed, unpacking the definition of
a $\Gamma$-equivariant presentation, we see that this corresponds to the ordinary
group presentation
\[\Present{$s_1,\ldots,s_n$}{$s_i^2$, $(s_i s_j)^3$, $(s_i s_j s_k)^4$}.\]
Here distinct indices represent distinct numbers.  This presentation of $\Sym_{n+1}$ is
a small variant on a presentation of Burnside \cite[p.\ 464]{Burnside} and Miller \cite[p.\ 366]{Miller}; see
\cite[\S 2.2]{GKKL} for more details.
\end{example}

\begin{remark}
Finite $\Gamma$-equivariant presentations are related to but different from the L-presentations defined by
Bartholdi \cite{Bartholdi} and used to study the Torelli subgroup of $\Aut(F_n)$ by
Day--Putman \cite{DayPutmanH2IA}.
\end{remark}

We prove that subject to some technical conditions, a group $G$ with a finite $\Gamma$-equivariant presentation
has $\HH_2(G)$ finitely generated as a $\Z[\Gamma]$-module.  A group $\Gamma$ is of type $\FP_{n}$ if
the trivial $\Z[\Gamma]$-module $\Z$ has a length $n$ partial resolution by finitely generated projective modules.
This holds, for instance, if $\Gamma$ has a $K(\Gamma,1)$ whose $n$-skeleton is compact.

\begin{maintheorem}
\label{theorem:mainabstract}
Let $G$ and $\Gamma$ be groups such that $\Gamma$ acts on $G$.  Assume the following:
\vspace{-\baselineskip}
\begin{compactitem}
\item $G$ has a finite $\Gamma$-equivariant presentation $(S_0,R_0)$.
\item $\HH_1(G)$ is finitely generated as an abelian group.
\item $\Gamma$ is of type $\FP_2$.
\item The $\Gamma$-stabilizers of all elements of $S_0$ are finitely generated.
\end{compactitem}
\vspace{-\baselineskip}
Then $\HH_2(G)$ is finitely generated as a $\Z[\Gamma]$-module.
\end{maintheorem}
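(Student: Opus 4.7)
The plan is to construct the presentation $2$-complex $X$ associated to the equivariant presentation --- one $0$-cell, a $1$-cell for each $s \in S = \Gamma \cdot S_0$, and a $2$-cell attached along $r$ for each $r \in R = \Gamma \cdot R_0$ --- on which $\Gamma$ acts cellularly. Then $\pi_1(X) = G$, and because a $K(G,1)$ is obtained from $X$ by attaching only cells of dimension $\geq 3$, the group $\HH_2(G)$ is a $\Z[\Gamma]$-module quotient of $\HH_2(X) = \Ker(C_2(X) \to C_1(X))$. The cellular chain complex of $X$ has $C_2(X) = \bigoplus_{r \in R_0} \Z[\Gamma/\Gamma_r]$ and $C_1(X) = \bigoplus_{s \in S_0} \Z[\Gamma/\Gamma_s]$, where $\Gamma_s, \Gamma_r$ denote the $\Gamma$-stabilizers, and the boundary $C_1(X) \to C_0(X) = \Z$ is zero since the $1$-cells are loops. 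Thus $\HH_1(X) = \HH_1(G)$, and there is the four-term exact sequence of $\Z[\Gamma]$-modules
\[
 0 \to \HH_2(X) \to C_2(X) \to C_1(X) \to \HH_1(G) \to 0.
\]
It therefore suffices to show that $\HH_2(X)$ is finitely generated as a $\Z[\Gamma]$-module.

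From here the argument is homological bookkeeping based on the standard principle that in a short exact sequence $0 \to A \to B \to C \to 0$ of $\Z[\Gamma]$-modules, if $B$ is of type $\FP_n$ and $C$ is of type $\FP_{n+1}$, then $A$ is of type $\FP_n$. I would split the displayed sequence at $K = \Image(C_2(X) \to C_1(X))$ and invoke two inputs: (i) $C_1(X)$ is of type $\FP_1$, which via Shapiro's lemma is equivalent to each $\Gamma_s$ being finitely generated --- precisely the stabilizer hypothesis; and (ii) $\HH_1(G)$ is of type $\FP_2$ as a $\Z[\Gamma]$-module. Given these, the principle applied with $n=1$ to $0 \to K \to C_1(X) \to \HH_1(G) \to 0$ produces $K$ of type $\FP_1$, and then with $n=0$ to $0 \to \HH_2(X) \to C_2(X) \to K \to 0$ produces $\HH_2(X)$ finitely generated.

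The main obstacle is establishing (ii). I would deduce it from a general lemma: if $\Gamma$ is of type $\FP_n$, then any $\Z[\Gamma]$-module $M$ that is finitely generated as an abelian group is itself of type $\FP_n$ as a $\Z[\Gamma]$-module. The lemma is proved by splitting $M$ via its torsion subgroup as $0 \to T \to M \to \Z^r \to 0$. The torsion-free quotient is handled by the ``diagonal resolution'' trick: if $P_\bullet \to \Z$ is a finitely generated projective $\Z[\Gamma]$-resolution, then $P_\bullet \otimes_\Z \Z^r$ with the diagonal $\Gamma$-action is a finitely generated projective $\Z[\Gamma]$-resolution of $\Z^r$, using the $\Z[\Gamma]$-isomorphism $\gamma \otimes v \mapsto \gamma \otimes \gamma^{-1} v$ that converts the diagonal action on $\Z[\Gamma] \otimes_\Z \Z^r$ into left multiplication only, identifying it with $\Z[\Gamma]^r$. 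For the finite torsion summand $T$, the $\Gamma$-action factors through a finite quotient $\Gamma/N$ with $N$ a finite-index (hence type $\FP_n$) subgroup; $T$ is then finitely generated over the Noetherian ring $\Z[\Gamma/N]$ and so admits a finitely generated projective $\Z[\Gamma/N]$-resolution, each of whose terms is of type $\FP_n$ over $\Z[\Gamma]$ by Shapiro's lemma, giving $T$ of type $\FP_n$ over $\Z[\Gamma]$ by splicing.
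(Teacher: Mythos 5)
Your proposal is correct, and its skeleton matches the paper's: both reduce to a four-term exact sequence of $\Z[\Gamma]$-modules relating $\HH_2$, the relations, the generators, and $\HH_1(G)$, and then run the identical $\FP_n$ bookkeeping (the generator term is of type $\FP_1$ because the stabilizers $\Gamma_s$ are finitely generated, $\HH_1(G)$ is of type $\FP_2$ by the key lemma, hence the image of the relation term is of type $\FP_1$ and the kernel is finitely generated). The differences are in packaging, and both are worth noting. First, the paper derives its sequence algebraically as the five-term exact sequence of $1 \to \Norm{R} \to F(S) \to G \to 1$, so its second term is the coinvariants $\left(\HH_1\left(\Norm{R}\right)\right)_G$ of the relation module and $\HH_2(G)$ appears on the nose as a kernel; you instead use the cellular chains of the presentation $2$-complex, whose second term is the larger module $\bigoplus_{r \in R_0}\Z[\Gamma/\Gamma_r]$, and you recover $\HH_2(G)$ only as a $\Gamma$-equivariant quotient of $\HH_2(X)$ via Hopf's formula --- which suffices for finite generation, and indeed the paper's claim that $\left(\HH_1\left(\Norm{R}\right)\right)_G$ is a finitely generated $\Z[\Gamma]$-module is exactly the surjection from your $C_2(X)$. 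Second, for the key lemma that a $\Z[\Gamma]$-module finitely generated as an abelian group is of type $\FP_n$ when $\Gamma$ is, both arguments treat the torsion part by passing to the finite-index kernel of $\Gamma \to \Aut(M_{\tor})$, but for the torsion-free part the paper tensors a free bimodule resolution of $\Z[\Gamma]$ (Pride's theorem) against $M$, whereas you use the diagonal-action untwisting isomorphism $\gamma \otimes v \mapsto \gamma \otimes \gamma^{-1}v$ applied to $P_\bullet \otimes_\Z M$; your version is more elementary and self-contained. The one point you should spell out is the $\Gamma$-equivariance of the surjection $\HH_2(X) \to \HH_2(G)$: the self-map of a $K(G,1)$ built from $X$ realizing $\gamma$ is defined only up to homotopy, but since maps to a $K(G,1)$ are determined up to homotopy by their effect on $\pi_1$, the relevant square commutes up to homotopy and hence on homology. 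That is a routine remark, not a gap.
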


\begin{remark}
If $\Gamma$ is of type $\FP_n$ for $n > 2$ and the $\Gamma$-stabilizers of all elements
of $S_0$ are of type $\FP_{n-1}$, then our proof of Theorem \ref{theorem:mainabstract}
almost proves that $\HH_2(G)$ is a $\Z[\Gamma]$-module of type $\FP_n$.
The only thing that goes wrong is Claim \ref{claimsa.4} from the proof, which would
require some kind of higher regularity for the relations that
seems hard to verify in practice.
\end{remark}

\p{Back to Torelli}
The group $\Mod_g^b$ acts on $\Torelli_g^b$ by conjugation, and we prove the following.

\begin{maintheorem}
\label{theorem:torellipres}
$\Torelli_g^b$ has a finite $\Mod_g^b$-equivariant presentation for $g \geq 3$ and $b \in \{0,1\}$.
\end{maintheorem}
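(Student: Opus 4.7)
The plan is to derive Theorem \ref{theorem:torellipres} from an existing infinite presentation of $\Torelli_g^b$ by observing that, once $g$ is fixed, both the natural generators and the natural relators fall into only finitely many $\Mod_g^b$-orbits. A natural starting point is Putman's infinite presentation of the Torelli group, whose generating set consists of separating Dehn twists $T_\gamma$ and bounding pair maps $T_\alpha T_\beta^{-1}$, and whose relators come from standard topological sources: commutation of disjoint twists, braid-type relations for twists with small geometric intersection, the lantern and crossed-lantern relations, and relations coming from Mess-type subgroups supported on small-genus subsurfaces.

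First I would set up the generating set $S \subset \Torelli_g^b$ to consist of all such separating twists and bounding pair maps, and pick a finite subset $S_0 \subset S$ of $\Mod_g^b$-orbit representatives. The topological type of a separating simple closed curve on $\Sigma_g^b$ is determined by how it partitions the remaining genus and the boundary components between its two sides, and for fixed $g$ and $b \in \{0,1\}$ there are only finitely many such partitions; the change-of-coordinates principle then gives that under the conjugation action of $\Mod_g^b$ there are finitely many orbits of separating twists. The same argument works for bounding pair maps, whose orbits are indexed by the topological types of bounding pairs. Hence $S = \Mod_g^b \cdot S_0$ with $S_0$ finite.

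Second, I would recall an explicit infinite set $R \subset F(S)$ of relators that together present $\Torelli_g^b$. Each relator is supported on an embedded subsurface of bounded topological complexity and is built from finitely many twists and bounding pair maps, so it is naturally a word in $F(S)$. Again by change of coordinates, for fixed $g$ and $b$ the topological configurations supporting these relators fall into finitely many $\Mod_g^b$-orbits, and picking one representative relator per orbit yields a finite subset $R_0 \subset R$ with $R = \Mod_g^b \cdot R_0$. The resulting pair $(S_0, R_0)$ is then a finite $\Mod_g^b$-equivariant presentation in the sense of the definition.

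The main obstacle is the bookkeeping required to promote topological equality of relators to literal equality of words in $F(S)$. When a mapping class $f \in \Mod_g^b$ acts on $r \in R_0$, the result must be the specific word obtained by replacing each letter $s \in S$ by $f s f^{-1}$, and not merely a word conjugate to this in $\Torelli_g^b$. This forces uniform conventions (consistent choices of orientations and marking data on each subsurface type) so that any stabilizer ambiguity that appears in the original presentation is absorbed into the generating set. A secondary obstacle is to ensure that $\Mod_g^b \cdot R_0$ really recovers a complete set of relators rather than just a normal generating set of some proper subgroup of relators; this reduces to a direct comparison with the original infinite presentation, showing that every relator there lies in the $\Mod_g^b$-orbit of some element of $R_0$. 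Once these issues are settled, Theorem \ref{theorem:torellipres} follows.
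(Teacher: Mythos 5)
Your first two steps are fine and match the paper: the generators (separating twists, bounding pair maps, and --- you omit these, but Putman's presentation also uses them --- simply intersecting pair maps) fall into finitely many $\Mod_g^b$-orbits by change of coordinates, and the same is true for the lantern, crossed lantern, Witt--Hall, commutator shuffle, and most of the formal relations. But your third paragraph contains a genuine gap: the claim that \emph{every} relator in Putman's presentation ``is supported on an embedded subsurface of bounded topological complexity'' is false. Putman's relations (F.6)--(F.8) are the conjugation relations $M T_x M^{-1} = T_{M(x)}$ (and their analogues for bounding pair maps and simply intersecting pair maps), where $M$ ranges over \emph{all} elements of the generating set. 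There is no bound on how the curves defining $M$ intersect the curves defining the element being conjugated, so even up to the action of $\Mod_g^b$ these relations fall into infinitely many orbits. Your proposed $R_0$ therefore cannot contain orbit representatives of all of them, and the ``secondary obstacle'' you mention --- checking that $\Mod_g^b \cdot R_0$ recovers all relators --- is exactly where the argument breaks down, not a bookkeeping issue.

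The paper's proof handles this by isolating the problem into the notion of a \emph{weakly-finite} $\Gamma$-equivariant presentation (where the conjugation relations $s t s^{-1} = \leftidx{^s}{t}$ for all $s,t \in S$ are granted for free) and proving Theorem \ref{theorem:deweak}: when $G \lhd \Gamma$ with both $G$ and $\Gamma$ finitely generated, a weakly-finite equivariant presentation can be upgraded to a finite one. The upgrade replaces the infinite family of conjugation relations by two finite families: the relations $s_0 x s_0^{-1} = \leftidx{^{s_0}}{x}$ for $s_0 \in S_0$ and $x$ in a finite generating set $X \subset S$ of $G$, and relations rewriting $\leftidx{^{y}}{x}$ as a word in $F(X)$ for $y$ in a finite generating set of $\Gamma$; an induction on word length in $\Gamma$ then recovers all of $R_{\conj}$. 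This uses Johnson's finite generation of $\Torelli_g^b$ for $g \geq 3$ and Dehn's finite generation of $\Mod_g^b$ --- inputs your proposal never invokes and which are essential. Without some substitute for this step, your argument does not close.
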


\vspace{-\baselineskip}
Given this, we can apply Theorem \ref{theorem:mainabstract} to deduce Theorem \ref{theorem:mainfg}.
Indeed, the other conditions of Theorem \ref{theorem:mainabstract} are satisfied:
\vspace{-\baselineskip}
\begin{compactitem}
\item Johnson \cite{JohnsonFinite} proved that $\Torelli_g^b$ is finitely generated, so $\HH_1(\Torelli_g^b)$ is
a finitely generated abelian group.  Johnson later calculated $\HH_1(\Torelli_g^b)$ in \cite{JohnsonAbel}.
\item Harer \cite{HarerFinite} proved that $\Mod_g^b$ is of type $\FP_{\infty}$, so it certainly is
of type $\FP_2$.
\item Let $(S_0,R_0)$ be the finite $\Mod_g^b$-equivariant presentation for $\Torelli_g^b$ given
by Theorem \ref{theorem:torellipres}.
The $\Mod_g^b$-stabilizers of elements of $S_0$ are same as the $\Mod_g^b$-centralizers of these
elements, and Rafi--Selinger--Yampolsky \cite{Rafi} proved that the $\Mod_g^b$-centralizers of all elements of $\Mod_g^b$ are finitely
generated.  We remark that since elements of $\Torelli_g^b$ are ``pure'', this could be deduced for
elements of $S_0$ using much earlier results of Ivanov--McCarthy 
\cite{IvanovMcCarthy}.\footnote{Indeed, it is not hard to work these centralizers
out explicitly and verify directly that they are finitely generated.  The only
moderately difficult case is that of simply intersecting pair maps.}
\end{compactitem}
Theorem \ref{theorem:mainabstract} then implies that $\HH_2(\Torelli_g^b)$ is a finitely generated
$\Z[\Mod_g^b]$-module.  Since the action of $\Mod_g^b$ on $\HH_2(\Torelli_g^b)$ factors through
$\Sp_{2g}(\Z)$, this implies that $\HH_2(\Torelli_g^b)$ is a finitely generated $\Z[\Sp_{2g}(\Z)]$-module,
as desired.

\p{Generators}
A presentation for $\Torelli_g^b$ close to the one claimed by Theorem \ref{theorem:torellipres} was
constructed by Putman \cite{PutmanInfinite}.  To explain what remains to be done, we discuss
Putman's presentation.  We begin with the following (see Figure \ref{figure:generators}):
\vspace{-\baselineskip}
\begin{compactitem}
\item A {\em separating twist} is a Dehn twist $T_x$ with $x$ a separating simple closed curve.
\item A {\em bounding pair map} is a product $T_y T_z^{-1}$, with $y$ and $z$ disjoint simple closed
curves whose union separates $\Sigma_g^b$ (note that we do not require that $y$ and $z$ be nonseparating).
\item A {\em simply intersecting pair map} is a commutator $[T_a,T_b]$, where $a$ and $b$ are simple
closed curves that intersect twice with opposite signs (again, note that we do 
not require that $a$ or $b$ be nonseparating).
\end{compactitem}
\vspace{-\baselineskip}
These all lie in $\Torelli_g^b$.  Building on work of Birman \cite{BirmanSiegel}, Powell \cite{Powell}
proved that $\Torelli_g^b$ is generated by separating twists and bounding pair maps.  See
\cite{PutmanCutPaste, HatcherMargalit} for modern proofs.

Putman's generating set $S(\Torelli)$ consists
of all separating twists, all bounding pair maps, and all simply intersecting pair maps.  All of these
can be embedded into the surface in various ways, but the change of coordinates
principle from \cite[\S 1.3]{FarbMargalitPrimer} shows that up to the action of $\Mod_g^b$, there are
only finitely many ways to embed each into the surface.  In other words, the conjugation action of $\Mod_g^b$
on $\Torelli_g^b$ restricts to a $\Mod_g^b$-action on $S(\Torelli)$ with finitely many
orbits.  Let $S_0(\Torelli) \subset S(\Torelli)$ contain a single representative of each of these orbits,
so $S_0(\Torelli)$ is a finite set whose $\Mod_g^b$-orbit is the generating set $S(\Torelli)$.

\Figure{figure:generators}{Generators}{A separating twist $T_x$, a bounding map $T_y T_z^{-1}$, and a simply
intersecting pair map $[T_a,T_b]$.}{60}

\p{Relations}
Let $R(\Torelli)$ be Putman's set of relations.
The element of $R(\Torelli)$ fall into a number of families: there are 8 ``formal relations'' along
with the ``lantern relations'', the ``crossed lantern relations'', the ``Witt--Hall relations'', and the
``commutator shuffle relations''.  For example, the lantern and crossed lantern relations are depicted in
Figure \ref{figure:lanterns}.  Almost all of these relations correspond to a finite list of pictures
that can be embedded in the surface in various ways, and again the change of coordinates principle
says that up to the action of $\Mod_g^b$ there are only finitely many ways of embedding each into the surface.

This might lead one to think that the action of $\Mod_g^b$ on $R(\Torelli)$ has finitely many orbits.  If
this were the case, then letting $R_0(\Torelli)$ be a set containing a single representative of each of
these orbits, the pair $(S_0(\Torelli),R_0(\Torelli))$ would be a finite
$\Mod_g^b$-equivariant presentation for $\Torelli_g^b$.

\p{Trouble}
However, there is an issue: three of the formal relations are not of this form.  In the notation of
\cite{PutmanInfinite}, these are the relations (F.6), (F.7), and (F.8).  They can be stated as follows.
Consider $M \in S(\Torelli)$.
\vspace{-\baselineskip}
\begin{compactitem}
\item Relation (F.6) says that if $T_x$ is a separating twist, then 
\[M T_x M^{-1} = T_{M(x)}.\]
\item Relation (F.7) says that if $T_y T_z^{-1}$ is a bounding pair map, then 
\[M T_x T_y^{-1} M^{-1} = T_{M(x)} T_{M(y)}^{-1}.\]
\item Relation (F.8) says that if $[T_a,T_b]$ is a simply intersecting pair map, then
\[M [T_a,T_b] M^{-1} = [T_{M(a)}, T_{M(b)}].\]
\end{compactitem}
\vspace{-\baselineskip}
Since there is no bound on how the curves making up $M$ intersect $T_x$ or $T_y T_z^{-1}$
or $[T_a,T_b]$, even up to the action of $\Mod_g^b$ these relations fall into 
infinitely many families.

\Figure{figure:lanterns}{Lanterns}{On the left is the lantern relation $T_x = (T_a T_b^{-1}) (T_c T_d^{-1}) (T_e T_f^{-1})$,
where $T_x$ is a separating twist and the other three terms are bounding pair maps.
On the right is the crossed lantern relation $(T_a T_b^{-1}) (T_c T_d^{-1}) = T_e T_f^{-1}$; all three terms are
bounding pair maps.}{60}

\p{Weakly-finite equivariant presentations}
In summary, what Putman constructed in \cite{PutmanInfinite} is
the following kind of equivariant presentation (a priori weaker
than a finite one): 

\begin{definition}
Let $\Gamma$ be a group and let $G \lhd \Gamma$, so $\Gamma$ acts on $G$ by
conjugation.  For $\gamma \in \Gamma$ and $g \in G$, write
$\leftidx{^\gamma}{g} = \gamma g \gamma^{-1}$. 
A {\em weakly-finite $\Gamma$-equivariant
presentation} for $G$ consists of a pair $(S_0,R_0)$ as follows:
\vspace{-\baselineskip}
\begin{compactitem}
\item $S_0 \subset G$ is a finite set whose orbits 
\[S \coloneqq \Gamma \cdot S_0 = \Set{$\leftidx{^\gamma}{s}$}{$\gamma \in \Gamma$, $s \in S$} \subset G\]
generate $G$.  
\item $R_0 \subset F(S)$ is a finite set of relations for $G$ with the following property.  
Let $R \subset F(S)$ be $\Gamma \cdot R_0$, and define
\[R_{\conj} = \Set{$s t s^{-1} \left(\leftidx{^s}{t}\right)^{-1}$}{$s, t \in S$}.\]
See Remark \ref{remark:conj} below for more explanation of the form of these relations.
Then $R \cup R_{\conj}$ is a complete set of relations for $G$.\qedhere
\end{compactitem}
\end{definition}

\begin{remark}
\label{remark:conj}
The relations in $R_{\conj}$ might seem a bit puzzling.  To unpack them, observe first that
by its very definition $S \subset G$ is closed under conjugation by $\Gamma$.  Since $S \subset G \subset \Gamma$,
this implies that $S$ is closed under conjugation by elements of $S$, i.e.\ for $s,t \in S$ we have
$\leftidx{^s}{t} \in S$.  The elements $s$ and $t$ and $\leftidx{^s}{t}$ are all elements of $S \subset G$ that
a priori have no relationship in the free group $F(S)$.  The relations in $R_{\conj}$ (which hold
trivially in $G$) force $s \in F(S)$ to conjugate $t \in F(S)$ to $\leftidx{^s}{t} \in F(S)$.
\end{remark}

To fix this problem, we will show that in favorable situations a weakly-finite $\Gamma$-equivariant presentation
can be converted into a finite $\Gamma$-equivariant presentation:

\begin{maintheorem}
\label{theorem:deweak}
Let $\Gamma$ be a group and let $G \lhd \Gamma$.  Assume the following holds:
\vspace{-\baselineskip}
\begin{compactitem}
\item There exists a weakly-finite $\Gamma$-equivariant presentation for $G$.
\item Both $\Gamma$ and $G$ are finitely generated.
\end{compactitem}
\vspace{-\baselineskip}
Then there exists a finite $\Gamma$-equivariant presentation for $G$.
\end{maintheorem}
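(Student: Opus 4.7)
The strategy is to augment $R_0$ by finitely many relations of two types --- ``expression relations'' that identify each letter $X_{\tau\cdot s_0}$ with a specific word in a finite generating set of $G$, and ``base conjugation relations'' that encode conjugation inside that generating set --- whose $\Gamma$-orbits, together with $\Gamma\cdot R_0$, suffice to derive the infinite family $R_{\conj}$. Since $G$ is finitely generated and $R_0$ is finite, we may enlarge the finite set $S_0$ (still called $S_0$) so that $R_0\subset F(S_0)$ and $S_0$ contains a finite generating set $\{g_1,\ldots,g_n\}$ of $G$. Fix a finite generating set $T\subset\Gamma$; using $G\subseteq\Gamma$, we may assume $\{g_1,\ldots,g_n\}\subset T$.

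For each $\tau\in T\cup T^{-1}$ and $s_0\in S_0$, fix a word $W_{\tau,s_0}\in F(\{g_1,\ldots,g_n\})$ representing $\tau\cdot s_0\in G$, and form the finite set $R_{\mathrm{exp}}=\{X_{\tau\cdot s_0}\cdot W_{\tau,s_0}^{-1}\}$ of expression relations. For each $s_0,t_0\in S_0$, fix $W_{s_0,t_0}\in F(\{g_1,\ldots,g_n\})$ representing $s_0 t_0 s_0^{-1}\in G$, and form the finite set $R_{\mathrm{conj}}^0=\{X_{s_0}X_{t_0}X_{s_0}^{-1}\cdot W_{s_0,t_0}^{-1}\}$ of base conjugation relations. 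The key lemma is that modulo the $\Gamma$-normal closure of $R_{\mathrm{exp}}$, every $X_s$ for $s\in S$ rewrites canonically to a word $W_s\in F(\{g_1,\ldots,g_n\})$. The proof is by induction on $k$ when $s=\sigma_k\cdots\sigma_1\cdot s_0$ with $\sigma_i\in T\cup T^{-1}$: by $\Gamma$-equivariance, the inductively established identity $X_{\sigma_{k-1}\cdots\sigma_1\cdot s_0}\equiv W_u$ yields $X_s\equiv\sigma_k\cdot W_u$, which is a product of letters $X_{\sigma_k\cdot g_i}$, each of which rewrites via $R_{\mathrm{exp}}$ to the word $W_{\sigma_k,g_i}$.

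Set $R_0' = R_0\cup R_{\mathrm{exp}}\cup R_{\mathrm{conj}}^0$, which is finite. For any $s,t\in S$, the key lemma reduces the conjugation relation $c_{s,t} = X_s X_t X_s^{-1} X_{s\cdot t}^{-1}$ to the word $W_s W_t W_s^{-1} W_{s\cdot t}^{-1}$ in $F(\{g_1,\ldots,g_n\})$; iterated application of $R_{\mathrm{conj}}^0$ --- peeling off the letters of $W_s$ one at a time and using each to conjugate $W_t$ letter by letter --- further reduces $W_s W_t W_s^{-1}$ to a word $W'$ in $F(\{g_1,\ldots,g_n\})$ also representing $s\cdot t$. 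The residual element $W'\cdot W_{s\cdot t}^{-1}$ lies in $\ker(F(\{g_1,\ldots,g_n\})\to G)$, and the crux of the proof is to show that it in fact lies in the normal closure of $\Gamma\cdot R_0'$ in $F(S)$. Since $G$ is not assumed finitely presentable, no direct presentation of $G$ on $\{g_1,\ldots,g_n\}$ is available; instead one must exploit that $\Gamma\cdot R_0\cup R_{\conj}$ presents $G$, showing that in the decomposition of $W'\cdot W_{s\cdot t}^{-1}$ as a product of conjugates of $\Gamma\cdot R_0\cup R_{\conj}$-elements, every conjugation relation $c_{s',t'}$ that appears is itself already derivable --- via a well-founded induction on a suitable complexity measure of $(s,t)$, or by enlarging $R_0'$ to include additional finite families of relations (for instance ``cross'' base conjugation for pairs $(s_0,\tau\cdot t_0)$) so that the reduction loop closes in one step. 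This closing of the inductive loop is the main obstacle I expect to encounter.
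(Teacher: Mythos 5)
Your overall architecture matches the paper's: enlarge the data by a finite set of ``expression'' relations (one for each pair consisting of a $\Gamma$-generator and a $G$-generator) and a finite set of ``base'' conjugation relations, prove by induction on word length in $\Gamma$ that every letter of $S$ reduces to a word in the finite generating set, and then derive all of $R_{\conj}$. The gap is exactly where you say it is, and it is fatal as you have set things up: after reducing $X_s X_t X_s^{-1}$ to a word $W'$ in the free group on $\{g_1,\dots,g_n\}$ and independently reducing $X_{\leftidx{^s}{t}}$ to $W_{s\cdot t}$, you are left having to show that two words in $F(\{g_1,\dots,g_n\})$ representing the same element of $G$ are equivalent modulo your relations. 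That is tantamount to having a presentation of $G$ on the $g_i$, which is not among the hypotheses ($G$ is only assumed finitely generated), and no well-founded induction on a complexity of $(s,t)$ or further finite enlargement of the relation set along the lines you sketch will manufacture one.

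The paper closes the loop by never producing a residual kernel element in the first place. First one proves the conjugation relation only for $s=s_0\in S_0$ and arbitrary $u\in S$: writing $u\equiv w$ with $w=x_1^{e_1}\cdots x_n^{e_n}$ a word in the finite generating set $X\subset S$, one conjugates letter by letter using the finite relation family $s_0 x s_0^{-1}\equiv \leftidx{^{s_0}}{x}$ (note the right-hand side is kept as a single \emph{letter} of $S$, not re-expanded as a word in $X$), obtaining $s_0us_0^{-1}\equiv \leftidx{^{s_0}}{w}$ where $\leftidx{^{s_0}}{w}$ is the word in the letters $\leftidx{^{s_0}}{x_i}$. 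Then --- and this is the step you are missing --- instead of independently reducing $\leftidx{^{s_0}}{u}$ and comparing, one applies the group element $s_0\in G\subset\Gamma$ to the already-established equivalence $u\equiv w$; since the relation set is a union of $\Gamma$-orbits, equivalences are preserved by the $\Gamma$-action, so $\leftidx{^{s_0}}{u}\equiv\leftidx{^{s_0}}{w}$ for free. The two chains meet at the identical word $\leftidx{^{s_0}}{w}$, so there is nothing left over. The general case $s=\leftidx{^{\gamma}}{s_0}$ then follows by setting $u=\leftidx{^{\gamma^{-1}}}{t}$ and transporting the whole relation $s_0us_0^{-1}\equiv\leftidx{^{s_0}}{u}$ by $\gamma$, again using $\Gamma$-invariance of the relation set. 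If you replace your final comparison step by this transport argument, your proof goes through with essentially the finite relation sets you already wrote down.
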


\vspace{-\baselineskip}
Since $\Mod_g^b$ is finitely generated for all $g$ and $b$ (a theorem due essentially to Dehn;
see \cite{FarbMargalitPrimer}) and $\Torelli_g^b$ is finitely generated for $g \geq 3$ and $b \in \{0,1\}$
(a theorem of Johnson \cite{JohnsonFinite}), we can combine Theorem \ref{theorem:deweak} with Putman's
result from \cite{PutmanInfinite} to deduce Theorem \ref{theorem:torellipres}, which asserts
that $\Torelli_g^b$ has a finite $\Mod_g^b$-equivariant presentation for $g \geq 3$ and $b \in \{0,1\}$.  
As we discussed above, this implies Theorem \ref{theorem:mainfg}.

\begin{remark}
Putman's construction of a weakly-finite $\Mod_g^b$-equivariant presentation for
$\Torelli_g^b$ used the main result of \cite{PutmanPresentation}, which explains
how to find presentations for groups acting on simplicial complexes without identifying
a fundamental domain for the action.  This machine inherently gives weakly-finite
equivariant presentations; indeed, the relations $R_{\conj}$ in the above definition
are precisely the ``conjugation relations'' from \cite{PutmanPresentation}.  Because
of this, we expect that Theorem \ref{theorem:deweak} will prove useful in other
applications of combinatorial group theory to representation stability.
\end{remark}

\p{Outline}
The two results we must prove are Theorems \ref{theorem:mainabstract} and \ref{theorem:deweak}.
The proof of Theorem \ref{theorem:mainabstract} is in \S \ref{section:abstract} and the proof of Theorem \ref{theorem:deweak} is in \S \ref{section:deweak}.  Here are brief
descriptions of how those proofs go:
\vspace{-\baselineskip}
\begin{compactitem}
\item For Theorem \ref{theorem:mainabstract}, the main idea is to embed
$\HH_2(G)$ into the five-term exact sequence in group homology and study
the finiteness properties of the various terms of this sequence as
$\Z[\Gamma]$-modules.
\item For Theorem \ref{theorem:deweak}, the main idea is to show that
all the conjugation relations can be deduced from those that show how to
express the conjugate of a $G$-generator by a $\Gamma$-generator
in terms of the finite generating set for $G$.
\end{compactitem}

\p{Acknowledgments}
We would like to thank Benjamin Steinberg for his help with the proof of Lemma \ref{lemma:finitedimrep}.

\section{The second homology group of normal subgroups}
\label{section:abstract}

This section contains the proof of Theorem \ref{theorem:mainabstract}.  We begin with some preliminary
results in \S \ref{section:abstractprelim} and then give the proof in \S \ref{section:abstractproof}.

\subsection{Preliminaries on finiteness conditions}
\label{section:abstractprelim}

All our rings have a unit, and unless otherwise specified all modules are left modules.  
We start with the following definition.

\begin{definition}
Let $R$ be a ring and let $M$ be an $R$-module.  We say that $M$ is of type $\FP_n$ if
there exists a length $n$ partial resolution
\[P_n \longrightarrow P_{n-1} \longrightarrow \cdots \longrightarrow P_0 \longrightarrow M \longrightarrow 0\]
of $M$ by finitely generated projective $R$-modules.  
\end{definition}

\begin{remark}
Being of type $\FP_0$ is equivalent to being finitely generated and being of type
$\FP_1$ is equivalent to being finitely presentable.
\end{remark}

The following lemma implies among other things that if $M$ has type $\FP_n$ for all $n$, then $M$
has an infinite length resolution by finitely generated projective $R$-modules.

\begin{lemma}[\cite{BieriBook}]
\label{lemma:alwaysfn}
Let $R$ be a ring and let $M$ be an $R$-module.  Then $M$ is of type $\FP_n$ if and only if $M$ is 
finitely generated and for all partial resolutions
\[P_{n'} \longrightarrow P_{n'-1} \longrightarrow \cdots \longrightarrow P_0 \longrightarrow M \longrightarrow 0\]
of $M$ by finitely generated projective $R$-modules of length $n' < n$, 
the kernel of the map $P_{n'} \rightarrow P_{n'-1}$ is finitely generated.
\end{lemma}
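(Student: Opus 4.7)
The plan is to prove the two implications separately, where the ``only if'' direction is the substantive one and relies on a generalized Schanuel lemma.

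For the ``if'' direction, I would assume that $M$ is finitely generated and that every partial resolution of length $n' < n$ by finitely generated projectives has a finitely generated kernel at the top, and then construct the desired length-$n$ resolution by induction. Start with a surjection $P_0 \twoheadrightarrow M$ with $P_0$ a finitely generated free $R$-module. At each subsequent stage $0 \le n' < n$, the hypothesis applied to the partial resolution built so far tells me that its top kernel $K_{n'}$ is finitely generated, so I can choose a surjection $P_{n'+1} \twoheadrightarrow K_{n'}$ with $P_{n'+1}$ finitely generated free and extend by one step. After $n$ such steps I obtain a length-$n$ partial resolution of $M$ by finitely generated projectives.

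For the ``only if'' direction, assume $M$ is of type $\FP_n$, witnessed by a fixed partial resolution $P_n \to P_{n-1} \to \cdots \to P_0 \to M \to 0$ by finitely generated projectives. Given any other partial resolution $Q_{n'} \to Q_{n'-1} \to \cdots \to Q_0 \to M \to 0$ of length $n' < n$ by finitely generated projectives, let $K$ be the kernel of $Q_{n'} \to Q_{n'-1}$. Truncating the fixed $P$-resolution at level $n'$ gives a second length-$n'$ partial resolution whose top kernel is $K' \coloneqq \Image(P_{n'+1} \to P_{n'})$, and this is finitely generated as a quotient of $P_{n'+1}$. The generalized Schanuel lemma, applied to these two partial resolutions of $M$, then yields an isomorphism
\[K \oplus P_{n'} \oplus Q_{n'-1} \oplus P_{n'-2} \oplus \cdots \cong K' \oplus Q_{n'} \oplus P_{n'-1} \oplus Q_{n'-2} \oplus \cdots,\]
whose summands alternate between terms from the two resolutions. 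Every summand on the right is finitely generated, so the right-hand side is finitely generated, and hence so is its direct summand $K$.

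The main obstacle is the generalized Schanuel lemma itself, which I would prove by induction on $n'$: the base case $n' = 0$ is the classical Schanuel lemma comparing $K \oplus Q_0$ with $K' \oplus P_0$, and the inductive step replaces $P_0$ and $Q_0$ by $P_0 \oplus Q_0$ (with new kernel determined by the classical case) and applies the inductive hypothesis to the shifted resolutions. This is standard homological algebra and is precisely the content of the cited reference \cite{BieriBook}, so the real work lies in careful bookkeeping of indices and direct summands rather than in any genuinely new idea.
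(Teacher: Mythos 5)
Your proof is correct, and since the paper simply cites this lemma to Bieri's book without giving an argument, there is nothing to diverge from: your two-directional argument (building the resolution step by step for the ``if'' direction, and the generalized Schanuel lemma for the ``only if'' direction) is exactly the standard proof found in the cited reference.
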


The following lemma describes how our finiteness conditions behave under extensions.

\begin{lemma}[\cite{BieriBook}]
\label{lemma:extension}
Let $R$ be a ring and let
\[0 \longrightarrow M' \longrightarrow M \longrightarrow M'' \longrightarrow 0\]
be a short exact sequence of $R$-modules.  The following hold.
\vspace{-\baselineskip}
\begin{compactenum}
\item If $M$ is of type $\FP_n$ and $M''$ is of type $\FP_m$ for some $n \geq 0$ and $m \geq 1$, 
then $M'$ is of type $\FP_r$ for $r = \min(n,m-1)$.
\item If $M'$ is of type $\FP_n$ and $M''$ is of type $\FP_m$ for some $n,m \geq 0$, then
$M$ is of type $\FP_r$ for $r = \min(n,m)$.
\item If $M'$ is of type $\FP_n$ and $M$ is of type $\FP_m$ for some $n,m \geq 0$,
then $M''$ is of type $\FP_r$ for $r = \min(n+1,m)$.
\end{compactenum}
\end{lemma}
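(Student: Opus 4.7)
The plan is to prove all three statements by simultaneous induction on $r$, using Lemma \ref{lemma:alwaysfn} to translate each $\FP_r$ claim into one about finite generation of the top kernel of a partial projective resolution, and using the horseshoe lemma to compare resolutions of $M'$, $M$, and $M''$. As a preliminary it is useful to record the following reformulation of Lemma \ref{lemma:alwaysfn}: for $r \geq 1$, an $R$-module $N$ is of type $\FP_r$ if and only if there exists a surjection $P \twoheadrightarrow N$ from a finitely generated projective $P$ whose kernel is of type $\FP_{r-1}$. This follows by splicing resolutions, and it lets us carry out the induction one homological degree at a time.

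For the base case $r = 0$, statement (2) says that a short exact sequence of finitely generated modules has finitely generated middle term, which is immediate from lifting generators. Statement (3) is just the elementary fact that a quotient of a finitely generated module is finitely generated. Statement (1) at $r = 0$ (so $m \geq 1$) requires that $M'$ be finitely generated when $M$ is and $M''$ is finitely presented: choose a surjection $F \twoheadrightarrow M$ from a finitely generated free $F$, compose with $M \to M''$ to get a surjection $F \twoheadrightarrow M''$, observe that its kernel $K \subset F$ is finitely generated since $M''$ is finitely presented, and note that $K$ surjects onto $M'$ by a snake lemma diagram chase.

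For the inductive step, starting from $0 \to M' \to M \to M'' \to 0$ together with surjections $F' \twoheadrightarrow M'$ and $F'' \twoheadrightarrow M''$ from finitely generated free modules, lift $F'' \to M''$ through $M \to M''$ using projectivity of $F''$ to produce a surjection $F' \oplus F'' \twoheadrightarrow M$ and a short exact sequence of kernels $0 \to K' \to K \to K'' \to 0$. By the reformulation, each of (1)--(3) at stage $r$ reduces to an $\FP_{r-1}$ assertion about one of these kernels, which is supplied by the inductive hypothesis applied to the SES of kernels. For example, in (2) the inductive form of (1) applied to $0 \to K' \to F' \to M' \to 0$ (with $F'$ of type $\FP_\infty$) gives $K'$ of type $\FP_{n-1}$, and similarly $K''$ is of type $\FP_{m-1}$; then the inductive case of (2) forces $K$ to be of type $\FP_{r-1}$, upgrading $M$ to $\FP_r$. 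Statements (1) and (3) at stage $r$ follow the same pattern, with the index shifts matching.

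The main obstacle is the bookkeeping for the simultaneous induction: the three statements at stage $r$ feed into one another, so the invocation order must be checked carefully, and the three formulas $\min(n, m-1)$, $\min(n, m)$, $\min(n+1, m)$ must align with the index shifts $n \mapsto n-1$ and $m \mapsto m-1$ that occur on passing to kernels. Once this accounting is set up, the inductive step reduces in each case to one application of the horseshoe construction together with the reformulation above, and the argument becomes essentially mechanical.
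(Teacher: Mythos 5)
The paper does not prove this lemma at all---it is quoted from Bieri's book---so there is no in-paper argument to compare against; I can only assess your proof on its own terms, and it is essentially correct: it is the standard ``dimension-shifting via the horseshoe lemma'' proof of this classical result. Two points deserve to be made explicit when you write it up. First, your reformulation of Lemma \ref{lemma:alwaysfn} is stated as an existence claim (\emph{some} finitely generated projective surjection has kernel of type $\FP_{r-1}$), but in the inductive step you apply it to surjections you did not get to choose, namely $F'\oplus F''\twoheadrightarrow M$ in cases (1) and (3); you need the stronger statement that for an $\FP_r$ module the kernel of \emph{every} surjection from a finitely generated projective is of type $\FP_{r-1}$. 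This does follow from Lemma \ref{lemma:alwaysfn} by splicing an arbitrary partial resolution of the kernel onto the given surjection, but it is the one place where the argument is not purely formal and should be spelled out. Second, for the simultaneous induction to close up you should first reduce to the case where the relevant formula is an equality (e.g.\ in (2) replace $n$ and $m$ by $r=\min(n,m)$, using that $\FP_n$ implies $\FP_r$ for $n\geq r$); otherwise the auxiliary application of statement (1) to $0\to K'\to F'\to M'\to 0$ invokes (1) at a stage that may exceed $r-1$ and hence is not covered by the induction hypothesis. (Alternatively, that particular step needs only the strengthened reformulation above, not statement (1).) With these two bookkeeping items fixed, the base cases and the index shifts $\min(n,m-1)\mapsto\min(n-1,m-2)$, $\min(n,m)\mapsto\min(n-1,m-1)$, $\min(n+1,m)\mapsto\min(n,m-1)$ all check out, and the proof is complete.
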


We now turn to groups.

\begin{definition}
A group $G$ is of type $\FP_n$ if the trivial $\Z[G]$-module $\Z$ is of type $\FP_n$.
\end{definition}

For a group $G$ of type $\FP_n$, the following lemma give a large supply of $\Z[G]$-modules of type $\FP_{n}$.  We expect that this lemma is known to the experts, but we do not
know a reference.

\begin{lemma}
\label{lemma:finitedimrep}
Let $G$ be a group of type $\FP_{n}$ and let $M$ be a $\Z[G]$-module that is finitely generated
as an abelian group.  Then $M$ is of type $\FP_{n}$.
\end{lemma}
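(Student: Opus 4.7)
Since the torsion subgroup $T \subseteq M$ is characteristic, it is $G$-invariant, giving a short exact sequence $0 \to T \to M \to M/T \to 0$ of $\Z[G]$-modules. By Lemma \ref{lemma:extension}(2), it suffices to prove that each of $T$ and $M/T$ is of type $\FP_n$ over $\Z[G]$, and the plan is to handle the free and torsion pieces by different techniques.

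For the torsion-free quotient, the approach will be a ``tensor trick''. Take a length-$n$ partial resolution $P_\bullet \to \Z \to 0$ of the trivial $\Z[G]$-module by finitely generated projective $\Z[G]$-modules, which exists because $G$ is of type $\FP_n$, and tensor it over $\Z$ with $M/T$, equipping each $P_i \otimes_\Z (M/T)$ with the diagonal $G$-action. Flatness of $M/T$ over $\Z$ preserves exactness. Each $P_i \otimes_\Z (M/T)$ is finitely generated projective over $\Z[G]$: the ``untwist'' isomorphism $\Z[G] \otimes_\Z (M/T) \xrightarrow{\cong} \Z[G]^r$ sending $g \otimes m \mapsto g \otimes g^{-1} m$, where $r = \rank_\Z(M/T)$, handles the case when $P_i$ is free, and the general projective case follows by passing to direct summands.

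For the torsion part, note that the action $G \to \Aut(T)$ has finite image, so its kernel $K$ is a finite-index subgroup of $G$. Standard finite-index theory then yields that $K$ is of type $\FP_n$ and, more importantly, that a $\Z[G]$-module is of type $\FP_n$ over $\Z[G]$ if and only if it is of type $\FP_n$ over $\Z[K]$: since $\Z[G]$ is a finitely generated free $\Z[K]$-module, restriction sends finitely generated projective $\Z[G]$-modules to finitely generated projective $\Z[K]$-modules, and Lemma \ref{lemma:alwaysfn} lets one transfer the kernel-finite-generation condition between the two rings. Over $\Z[K]$ the action on $T$ is trivial, so $T \cong \bigoplus_i \Z/n_i\Z$ as $\Z[K]$-modules, with each summand fitting in $0 \to \Z \xrightarrow{\cdot n_i} \Z \to \Z/n_i\Z \to 0$; applying Lemma \ref{lemma:extension}(3) to this sequence and then Lemma \ref{lemma:extension}(2) to the direct sum gives that $T$ is of type $\FP_n$ over $\Z[K]$, and the finite-index transfer then gives $\FP_n$ over $\Z[G]$.

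The main obstacle will be the promotion of $\FP_n$ from $\Z[K]$ back to $\Z[G]$ in the torsion case. While this is classical, verifying it directly from Lemma \ref{lemma:alwaysfn} is the one step requiring input beyond the preliminaries; the tensor trick for the torsion-free quotient and the iterated extensions within $\Z[K]$-modules are essentially formal consequences of the lemmas stated above.
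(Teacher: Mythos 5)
Your proof is correct, and it follows the paper's overall architecture (split off the torsion subgroup via Lemma \ref{lemma:extension}, handle the finite piece by passing to the finite-index kernel of $G \to \Aut(T)$ and transferring $\FP_n$ back up via Lemma \ref{lemma:alwaysfn}, exactly as the paper does). The one place where you genuinely diverge is the torsion-free quotient: you use the classical ``diagonal action'' tensor trick, tensoring a projective resolution of the trivial module $\Z$ over $\Z$ with $M/T$ and untwisting $\Z[G] \otimes_{\Z} (M/T)$ via $g \otimes m \mapsto g \otimes g^{-1}m$ to see that each term is finitely generated projective. The paper instead invokes a theorem of Pride producing a length-$n$ partial resolution of $\Z[G]$ by finitely generated free $\Z[G]$-\emph{bimodules} and tensors that over $\Z[G]$ with $M$. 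The two arguments do the same work: the bimodule $\Z[G] \otimes_{\Z} \Z[G]$ contracted against $M$ yields the same $\Z[G] \otimes_{\Z} M \cong \Z[G]^{\oplus \rank(M)}$ that your untwisting isomorphism produces. Your version is more self-contained and elementary, needing only the existence of the resolution witnessing $G$ being of type $\FP_n$ plus flatness of $M/T$ over $\Z$; the paper's version outsources the homological bookkeeping to Pride's theorem at the cost of an external citation. Both are valid, and you correctly flag the finite-index transfer of $\FP_n$ from $\Z[K]$ to $\Z[G]$ as the step needing the most care --- that is precisely the step the paper spells out in detail using Lemma \ref{lemma:alwaysfn} applied over both rings.
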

\vspace{-\baselineskip}
\begin{proof}
Let $M_{\tor}$ be the torsion subgroup of $M$, so $M/M_{\tor}$ is free abelian and
\[0 \longrightarrow M_{\tor} \longrightarrow M \longrightarrow M/M_{\tor} \longrightarrow 0\]
is a short exact sequence of $\Z[G]$-modules.  Lemma \ref{lemma:extension} says that it is
enough to prove that $M_{\tor}$ and $M/M_{\tor}$ are of type $\FP_{n}$, so we are reduced
to proving the following two special cases of the lemma.

\begin{case}
$M$ is a finite abelian group.
\end{case}

\vspace{-\baselineskip}
In this case, the kernel $G'$ of the map $G \rightarrow \Aut(M)$ is a finite-index subgroup of $G$.  Since
a partial resolution of $\Z$ by finitely generated projective $\Z[G]$-modules restricts to a
partial resolution of $\Z$ by finitely generated projective $\Z[G']$-modules, the group $G'$ is of
type $\FP_{n}$.  We claim that $M$ is of type $\FP_{n}$ as a $\Z[G']$-module.  Indeed, there exists
a finitely generated free abelian group $\tM$ and a surjection $f\colon \tM \rightarrow M$.
Endow $\tM$ with the trivial $\Z[G']$-module structure.  We then have a short exact sequence
\[0 \longrightarrow \ker(f) \longrightarrow \tM \stackrel{f}{\longrightarrow} M \longrightarrow 0\]
of $\Z[G']$-modules (all trivial!).  Both $\tM$ and $\ker(f)$ are direct sums of the trivial $\Z[G']$-module $\Z$.
Since $G'$ is of type $\FP_{n}$, so are the $\Z[G']$-modules $\tM$ and $\ker(f)$.  Lemma \ref{lemma:extension}
thus implies that $M$ is also of type $\FP_{n}$ as a $\Z[G']$-module, as claimed.

Now consider some $n'<n$ and a length $n'$ partial resolution
\begin{equation}
\label{eqn:partialunrestricted}
P_{n'} \longrightarrow P_{n'-1} \longrightarrow \cdots \longrightarrow P_0 \longrightarrow M \longrightarrow 0
\end{equation}
of $M$ by finitely generated projective $\Z[G]$-modules.  By Lemma \ref{lemma:alwaysfn}, to prove that $M$ is of
type $\FP_{n}$ as a $\Z[G]$-module, it is enough to prove that the kernel of the map $P_{n'} \rightarrow P_{n'-1}$
is a finitely generated $\Z[G]$-module.  Since $G'$ is a finite-index subgroup of $G$, the restriction
of \eqref{eqn:partialunrestricted} to $\Z[G']$ is a length $n'$ partial resolution of $M$ by finitely generated
projective $\Z[G']$-modules.  Since $M$ is of type $\FP_{n}$ as a $\Z[G']$-module, Lemma \ref{lemma:alwaysfn}
implies that the kernel of the map $P_{n'} \rightarrow P_{n'-1}$ is a finitely generated $\Z[G']$-module.  This
clearly implies that it is also a finitely generated $\Z[G]$-module, as desired. 

\begin{case}
$M$ is a finitely generated free abelian group.
\end{case}

\vspace{-\baselineskip}
We learned the argument in this case from Steinberg \cite{SteinbergAnswer}.  Recall that if $R$ is a ring, then a 
free $R$-bimodule is a direct sum of copies of the $R$-bimodule $R \otimes_{\Z} R$.  Since
$G$ is of type $\FP_{n}$, a theorem of 
Pride \cite[Theorem 2]{Pride} shows that there exists a length $n$ partial resolution
\[Q_n \longrightarrow Q_{n-1} \longrightarrow \cdots \longrightarrow Q_0 \longrightarrow \Z[G] \longrightarrow 0\]
of $\Z[G]$ by finitely generated free $\Z[G]$-bimodules.  We claim that
\begin{equation}
\label{eqn:proposedresolution}
Q_n \otimes_{\Z[G]} M \longrightarrow \cdots \longrightarrow Q_0 \otimes_{\Z[G]} M \longrightarrow \Z[G] \otimes_{\Z[G]} M = M \longrightarrow 0
\end{equation}
is a length $n$ partial resolution of $M$ by finitely generated free $\Z[G]$-modules.  This requires checking two things:
\vspace{-\baselineskip}
\begin{compactitem}
\item The chain complex \eqref{eqn:proposedresolution} is exact.  To see this, observe that each free
$\Z[G]$-bimodule $Q_k$ is a free right $\Z[G]$-module (not necessarily finitely generated).  This means
that the chain complex \eqref{eqn:proposedresolution} computes $\Tor^{\Z[G]}(\Z[G],M)$, which vanishes
since $\Z[G]$ is free.
\item Each $Q_k \otimes_{\Z[G]} M$ is a finitely generated free $\Z[G]$-module.  This follows from the fact
that $Q_k$ is a finite direct sum of copies of $\Z[G] \otimes_{\Z} \Z[G]$ together with the observation that
\[\left(\Z[G] \otimes_{\Z} \Z[G]\right) \otimes_{\Z[G]} M \cong \Z[G] \otimes_{\Z} M \cong \left(\Z[G]\right)^{\oplus \rank(M)}.\qedhere\] 
\end{compactitem}
\vspace{-\baselineskip}
\end{proof}

\subsection{From equivariant presentations to finiteness}
\label{section:abstractproof}

We now prove Theorem \ref{theorem:mainabstract}.

\vspace{-\baselineskip}
\begin{proof}[Proof of Theorem \ref{theorem:mainabstract}]
We first recall the setup.  Let $G$ and $\Gamma$ be groups such that $\Gamma$ acts on $G$.
Assume that the following hold:
\vspace{-\baselineskip}
\begin{compactitem}
\item $G$ has a finite $\Gamma$-equivariant presentation $(S_0,R_0)$.
\item $\HH_1(G)$ is finitely generated as an abelian group.
\item $\Gamma$ is of type $\FP_2$.
\item The $\Gamma$-stabilizers of all elements of $S_0$ are finitely generated.
\end{compactitem}
\vspace{-\baselineskip}
We must prove that $\HH_2(G)$ is finitely generated as a $\Z[\Gamma]$-module, i.e.\ that $\HH_2(G)$ is a 
$\Z[\Gamma]$-module of type $\FP_0$.

Let $S = \Gamma \cdot S_0$, let $R = \Gamma \cdot R_0$,
and let $\Norm{R}$ be the normal closure of $R$ in $F(S)$.  We
thus have a short exact sequence
\[1 \longrightarrow \Norm{R} \longrightarrow F(S) \longrightarrow G \longrightarrow 1.\]
Since $H_2(F(S)) = 0$, the five-term exact sequence in group cohomology associated to this short exact sequence
is of the form
\[0 \longrightarrow \HH_2\left(G\right) \longrightarrow \left(\HH_1\left(\Norm{R}\right)\right)_G \stackrel{\phi}{\longrightarrow} \HH_1\left(F\left(S\right)\right) \stackrel{\psi}{\longrightarrow} \HH_1\left(G\right) \longrightarrow 0.\]
By construction, this is an exact sequence of $\Z[\Gamma]$-modules.  The following five claims
elucidate the finiteness properties of various terms of this exact sequence.  The theorem itself
is the fifth one.

\begin{claimsa}
\label{claimsa.1}
$\HH_1(G)$ is a $\Z[\Gamma]$-module of type $\FP_{2}$.
\end{claimsa}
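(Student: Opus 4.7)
The plan is to apply Lemma \ref{lemma:finitedimrep} essentially verbatim, with the role of the group in the lemma played by $\Gamma$ and the role of the module played by $\HH_1(G)$. Since $\Gamma$ acts on $G$ by group automorphisms (this is implicit in the notion of a $\Gamma$-equivariant presentation, and is also forced by the fact that $\Gamma$ is supposed to act on $F(S)$ compatibly with its action on $G$), the induced action on $\HH_1(G)$ gives $\HH_1(G)$ the structure of a $\Z[\Gamma]$-module. Among the four hypotheses of Theorem \ref{theorem:mainabstract}, only two are relevant for this step: the assumption that $\Gamma$ is of type $\FP_2$ and the assumption that $\HH_1(G)$ is finitely generated as an abelian group.

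With these two ingredients, Lemma \ref{lemma:finitedimrep} applied with $n = 2$ immediately gives that $\HH_1(G)$ is of type $\FP_2$ as a $\Z[\Gamma]$-module, which is exactly the content of the claim. There is no real obstacle here; the statement is a direct one-line corollary of the preliminary lemma. The remaining two hypotheses of the theorem (existence of a finite $\Gamma$-equivariant presentation for $G$, and finite generation of the $\Gamma$-stabilizers of elements of $S_0$) play no role in this particular claim, but will presumably be needed to control the harder middle term $(\HH_1(\Norm{R}))_G$ of the five-term exact sequence in the subsequent claims.
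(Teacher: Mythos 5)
Your proof is correct and is exactly the paper's argument: both apply Lemma \ref{lemma:finitedimrep} with $n=2$, using the hypotheses that $\Gamma$ is of type $\FP_2$ and that $\HH_1(G)$ is a finitely generated abelian group.
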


\vspace{-\baselineskip}
This follows from Lemma \ref{lemma:finitedimrep}, which
we can apply since $\Gamma$ is a group of type $\FP_{2}$ and $\HH_1(G)$ is a finitely generated abelian group.

\begin{claimsa}
\label{claimsa.2}
$\HH_1(F(S))$ is a $\Z[\Gamma]$-module of type $\FP_{1}$.
\end{claimsa}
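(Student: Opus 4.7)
The plan is to describe the $\Z[\Gamma]$-module structure on $\HH_1(F(S))$ explicitly and then reduce to a standard fact about modules induced from finitely generated subgroups.

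Since $F(S)$ is the free group on $S$, its abelianization $\HH_1(F(S)) \cong \Z[S]$ is the permutation $\Z[\Gamma]$-module on $S$, where $\Gamma$ acts via its action on $S$. Choose a subset $T \subseteq S_0$ containing exactly one representative from each $\Gamma$-orbit meeting $S_0$. Since $S_0$ is finite, so is $T$, and $S = \bigsqcup_{s \in T} \Gamma \cdot s$. Writing $\Gamma_s$ for the $\Gamma$-stabilizer of $s$, this gives a $\Z[\Gamma]$-module decomposition
\[
\HH_1(F(S)) \cong \bigoplus_{s \in T} \Z[\Gamma/\Gamma_s].
\]
Since $T$ is finite, Lemma \ref{lemma:extension}(2) together with an induction on $|T|$ reduces the claim to showing that each summand $\Z[\Gamma/\Gamma_s]$ is of type $\FP_1$ as a $\Z[\Gamma]$-module.

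For this, recall that $\Z[\Gamma/\Gamma_s] \cong \Z[\Gamma] \otimes_{\Z[\Gamma_s]} \Z$ is induced from the trivial $\Z[\Gamma_s]$-module. Since $\Z[\Gamma]$ is free, hence flat, as a right $\Z[\Gamma_s]$-module, the induction functor $\Z[\Gamma] \otimes_{\Z[\Gamma_s]} -$ is exact and carries finitely generated projective $\Z[\Gamma_s]$-modules to finitely generated projective $\Z[\Gamma]$-modules. Consequently, any length $1$ partial resolution of $\Z$ by finitely generated projective $\Z[\Gamma_s]$-modules induces up to a length $1$ partial resolution of $\Z[\Gamma/\Gamma_s]$ by finitely generated projective $\Z[\Gamma]$-modules.

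Finally, a group is of type $\FP_1$ if and only if it is finitely generated: given generators $x_1, \ldots, x_k$ of $\Gamma_s$, the elements $x_i - 1$ generate the augmentation ideal, yielding a partial resolution $\Z[\Gamma_s]^k \to \Z[\Gamma_s] \to \Z \to 0$. By hypothesis, every $\Gamma$-stabilizer of an element of $S_0$ is finitely generated, so $\Gamma_s$ is of type $\FP_1$ for each $s \in T$, and the claim follows. There is no real obstacle here; everything reduces to standard facts about induced modules, so I expect the writeup to be short.
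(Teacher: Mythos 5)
Your proof is correct and follows essentially the same route as the paper: decompose $\HH_1(F(S))$ into a finite direct sum of permutation modules $\Z[\Gamma/\Gamma_s]$ and exhibit a finite presentation of each using the finite generation of $\Gamma_s$. The paper simply writes down the presentation $\bigoplus_{x\in X}\Z[\Gamma]\to\Z[\Gamma]\to\Z[\Gamma/\Gamma_s]\to 0$ with maps $\omega\mapsto\omega(x-1)$ directly, whereas you obtain the identical resolution by inducing up the standard partial resolution of $\Z$ over $\Z[\Gamma_s]$; this is only a difference in packaging.
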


\vspace{-\baselineskip}
By construction, we have
\[\HH_1(F(S)) \cong \bigoplus_{s \in S_0} \Z[\Gamma / \Gamma_s],\]
where $\Gamma_s$ denotes the $\Gamma$-stabilizer of $s \in S_0$.  Fixing some $s \in S_0$, it
is thus enough to prove that $\Z[\Gamma / \Gamma_s]$ is a $\Z[\Gamma]$-module of type $\FP_1$.  By assumption,
$\Gamma_s$ is finitely generated; let $X$ be a finite generating set for it.  We then have a finite presentation
\[\bigoplus_{x \in X} \Z[\Gamma] \stackrel{\iota}{\longrightarrow} \Z[\Gamma] \longrightarrow \Z[\Gamma/\Gamma_s] \longrightarrow 0,\]
where the map $\iota$ is of the form $\iota = \oplus_{x \in X} \iota_x$ with 
$\iota_x\colon \Z[\Gamma] \rightarrow \Z[\Gamma]$ the map taking $\omega \in \Z[\Gamma]$ to $\omega (x-1) \in \Z[\Gamma]$.
The claim follows.

\begin{claimsa}
\label{claimsa.3}
$\ker(\psi) = \Image(\phi)$ is a $\Z[\Gamma]$-module of type $\FP_1$.
\end{claimsa}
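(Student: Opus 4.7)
The plan is to deduce this claim as a purely formal consequence of the two preceding claims together with the behavior of $\FP_n$ under short exact sequences (Lemma \ref{lemma:extension}). Concretely, the submodule $\ker(\psi) = \Image(\phi) \subset \HH_1(F(S))$ together with the quotient $\HH_1(F(S))/\ker(\psi) \cong \HH_1(G)$ (using that $\psi$ is surjective, as recorded by the five-term exact sequence above) fits into a short exact sequence of $\Z[\Gamma]$-modules
\[0 \longrightarrow \ker(\psi) \longrightarrow \HH_1\left(F(S)\right) \stackrel{\psi}{\longrightarrow} \HH_1(G) \longrightarrow 0.\]

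I would then invoke Lemma \ref{lemma:extension}(1), which says that if the middle term is of type $\FP_n$ and the right-hand term is of type $\FP_m$ with $m \geq 1$, the left-hand term is of type $\FP_{\min(n,m-1)}$. Claim \ref{claimsa.2} gives that $\HH_1(F(S))$ is of type $\FP_1$, and Claim \ref{claimsa.1} gives that $\HH_1(G)$ is of type $\FP_2$. Applying the lemma with $n=1$ and $m=2$ yields that $\ker(\psi)$ is of type $\FP_{\min(1,1)} = \FP_1$, which is exactly the claim. There is no real obstacle here: the work has already been done in Claims \ref{claimsa.1} and \ref{claimsa.2}, and this step is a one-line bookkeeping application of the extension lemma, packaging the finiteness information we have into the form needed to identify $\HH_2(G)$ (the kernel of $\phi$) in the next claim.
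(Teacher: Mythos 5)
Your proposal is correct and is exactly the argument the paper intends: the paper's proof of this claim is the one-line statement that it follows from Claims \ref{claimsa.1} and \ref{claimsa.2} together with Lemma \ref{lemma:extension}, and you have simply spelled out the relevant short exact sequence and the application of part (1) of that lemma with $n=1$, $m=2$.
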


\vspace{-\baselineskip}
This follows from Claims \ref{claimsa.1} and \ref{claimsa.2}
together with Lemma \ref{lemma:extension}.

\begin{claimsa}
\label{claimsa.4}
$\left(\HH_1\left(\Norm{R}\right)\right)_G$ is a $\Z[\Gamma]$-module of type $\FP_0$.
\end{claimsa}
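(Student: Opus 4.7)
The plan is to identify $(\HH_1(\Norm{R}))_G$ with (a quotient of) the classical \emph{relation module} of the presentation $1 \to \Norm{R} \to F(S) \to G \to 1$, and then to exploit the finiteness of the orbit-representative set $R_0$.

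First, I would recall the standard fact that since $\Norm{R}$ is the normal closure of $R$ in $F(S)$, every element of $\Norm{R}$ is a product of $F(S)$-conjugates of elements of $R^{\pm 1}$. In the abelianization $\HH_1(\Norm{R}) = \Norm{R}^{\mathrm{ab}}$, conjugation by $w \in F(S)$ depends only on the image of $w$ in $G = F(S)/\Norm{R}$, so $\HH_1(\Norm{R})$ becomes a $\Z[G]$-module generated by the classes $[r]$ for $r \in R$. Passing to $G$-coinvariants trivializes the $G$-action, so $(\HH_1(\Norm{R}))_G$ is generated as an abelian group by $\{[r] : r \in R\}$.

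Next, I would observe that the $\Gamma$-action on $F(S)$ coming from $S = \Gamma \cdot S_0$ preserves $R = \Gamma \cdot R_0$, hence preserves $\Norm{R}$, and so descends to a $\Gamma$-action on $\HH_1(\Norm{R})$. This action and the $G$-conjugation action combine into a $(\Gamma \ltimes G)$-action on $\HH_1(\Norm{R})$, so the $\Gamma$-action descends further to $(\HH_1(\Norm{R}))_G$, satisfying $\gamma \cdot [r] = [\gamma \cdot r]$ for $\gamma \in \Gamma$ and $r \in R$. Combined with the previous paragraph, this shows that $(\HH_1(\Norm{R}))_G$ is generated as a $\Z[\Gamma]$-module by the finite set $\{[r] : r \in R_0\}$, and hence is of type $\FP_0$.

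There is no serious obstacle here; the whole argument amounts to unpacking the definition of a finite $\Gamma$-equivariant presentation. The only bookkeeping required is to check that the two actions assemble correctly so that the $\Gamma$-action descends to the $G$-coinvariants, but this is immediate from the given hypothesis that $\Gamma$ acts on $G$ (normalizing it in $\Gamma \ltimes G$) and that $R$ is $\Gamma$-stable.
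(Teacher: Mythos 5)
Your argument is correct and is essentially the paper's own proof: both show that $(\HH_1(\Norm{R}))_G$ is generated over $\Z[\Gamma]$ by the classes of the finitely many elements of $R_0$, using that $\Norm{R}$ is generated by conjugates of $R$, that conjugation acts through $G$ on the abelianization, and that passing to $G$-coinvariants collapses all conjugates of a given $r \in R$ to a single class. You merely spell out the compatibility of the $\Gamma$- and $G$-actions in more detail than the paper does.
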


\vspace{-\baselineskip}
Since $R_0$ is finite, it is enough to show that the evident map
\[\bigoplus_{r \in R_0} \Z[\Gamma] \longrightarrow \left(\HH_1\left(\Norm{R}\right)\right)_G\]
is a surjection.  The image of this map equals the image of the map
\[\HH_1(R) \longrightarrow \left(\HH_1\left(\Norm{R}\right)\right)_G.\]
But this map is surjective since we are taking $G$-coinvariants, which causes all $G$-conjugates
of an element $r \in R$ to collapse to a single element of $\left(\HH_1\left(\Norm{R}\right)\right)_G$.

\begin{claimsa}
\label{claimsa.5}
$\ker(\phi) = \HH_2(G)$ is a $\Z[\Gamma]$-module of type $\FP_0$.
\end{claimsa}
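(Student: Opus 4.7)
The plan is to deduce this claim immediately from the three preceding claims by invoking Lemma \ref{lemma:extension}. Everything has been arranged so that this last step is essentially bookkeeping.

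First, I would extract from the five-term exact sequence the short exact sequence of $\Z[\Gamma]$-modules
\[0 \longrightarrow \HH_2(G) \longrightarrow \left(\HH_1\left(\Norm{R}\right)\right)_G \stackrel{\phi}{\longrightarrow} \Image(\phi) \longrightarrow 0,\]
using that the five-term sequence identifies $\HH_2(G)$ with $\ker(\phi)$. By Claim \ref{claimsa.4} the middle term $\left(\HH_1\left(\Norm{R}\right)\right)_G$ is a $\Z[\Gamma]$-module of type $\FP_0$, and by Claim \ref{claimsa.3} the quotient $\Image(\phi) = \ker(\psi)$ is a $\Z[\Gamma]$-module of type $\FP_1$.

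Now apply part (1) of Lemma \ref{lemma:extension} with $M = \left(\HH_1\left(\Norm{R}\right)\right)_G$ of type $\FP_n$ for $n = 0$ and $M'' = \Image(\phi)$ of type $\FP_m$ for $m = 1$. The lemma concludes that the kernel $M' = \HH_2(G)$ is of type $\FP_{\min(n,\,m-1)} = \FP_0$, which is exactly the statement that $\HH_2(G)$ is finitely generated as a $\Z[\Gamma]$-module. This completes the proof of Theorem \ref{theorem:mainabstract}.

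There is no real obstacle at this stage; the substantive work has already been done in Claims \ref{claimsa.1}--\ref{claimsa.4}. The only thing to take care of is that the direction of Lemma \ref{lemma:extension} being invoked is the one which bounds finiteness of a submodule and correctly loses one index when passing from the quotient; this is precisely why we needed $\Image(\phi)$ to be finitely presented rather than merely finitely generated, which in turn is why Claim \ref{claimsa.1} required $\FP_2$ rather than $\FP_1$ (and hence the hypothesis that $\Gamma$ is of type $\FP_2$).
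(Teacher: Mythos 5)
Your proof is correct and is exactly the argument the paper intends: the paper's own justification of this claim is simply to cite Claims \ref{claimsa.3} and \ref{claimsa.4} together with Lemma \ref{lemma:extension}, and you have correctly identified the relevant short exact sequence and the right part of that lemma (part (1) with $n=0$, $m=1$). Your closing remark about why $\Image(\phi)$ must be of type $\FP_1$ rather than merely $\FP_0$, tracing back to the $\FP_2$ hypothesis on $\Gamma$, is also an accurate account of the bookkeeping.
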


\vspace{-\baselineskip}
This follows from Claims \ref{claimsa.3} and \ref{claimsa.4} together with Lemma \ref{lemma:extension}.
\end{proof}

\section{Upgrading weakly-finite equivariant presentations}
\label{section:deweak}

We conclude the paper by proving Theorem \ref{theorem:deweak}.

\vspace{-\baselineskip}
\begin{proof}[Proof of Theorem \ref{theorem:deweak}]
We begin by recalling the setup.
Let $\Gamma$ be a group and let $G$ be a normal subgroup of $\Gamma$.  Assume
that both $G$ and $\Gamma$ are finitely generated, and let $(S_0,R_0)$ be
a weakly-finite $\Gamma$-equivariant presentation for $G$.  Our goal is to
construct a finite $\Gamma$-equivariant presentation for $G$.

For $\gamma \in \Gamma$ and $g \in G$, write
$\leftidx{^\gamma}{g}$ for the image of $g$ under the action of $\gamma$.
As in the definition of a weakly-finite $\Gamma$-equivariant presentation, let
$S = \Gamma \cdot S_0$, let
$R = \Gamma \cdot R_0$, and let
\[R_{\conj} = \Set{$s t s^{-1} \left(\leftidx{^s}{t}\right)^{-1}$}{$s, t \in S$}.\]
By definition, $R \cup R_{\conj}$ is a complete set of relations for $G$.  To prove
the theorem, it is enough to construct a further finite set $R_0' \subset F(S)$
with the following property:
\begin{itemize}
\item[($\dagger$)] Let $R' = \Gamma \cdot R_0'$.
Then each relation in $R_{\conj}$ is a consequence of the relations in $R'$.
\end{itemize}
Since $G$ is finitely generated and $S$ generates $G$, there exists a finite subset
$X$ of $S$ that generates $G$.  Define $R_0''$ to be the following finite subset
of $R_{\conj}$:
\[R_0'' = \Set{$s x s^{-1} \left(\leftidx{^s}{x}\right)^{-1}$}{$s \in S_0$, $x \in X$}.\]
Now let $Y$ be a finite generating set for $\Gamma$ that is symmetric in the
sense that if $y \in Y$, then $y^{-1} \in Y$.  For $y \in Y$ and $x \in X$,
we can find some $w_{y,x} \in F(X) \subset F(S)$ that maps to $\leftidx{^y}{x} \in S \subset G$ under
the composition $F(X) \hookrightarrow F(S) \rightarrow G$.
Define 
\[R_0''' = \Set{$\left(\leftidx{^y}{x}\right) \left(w_{y,x}\right)^{-1}$}{$y \in Y$, $x \in X$} \subset F(S)\]
and $R_0' = R_0'' \cup R_0'''$.  

We claim that $R_0'$ satisfies ($\dagger$).  To see this, let $R' = \Gamma \cdot R_0'$.
The claim ($\dagger$) is
the third of the following claims.  For words $h,k \in F(S)$, write $h \equiv k$ if
$h$ equals $k$ modulo $R'$.

\begin{claimsb}
\label{claimsb:1}
For all $u \in S$, there is some word $w \in F(X)$ such that $u \equiv w$.
\end{claimsb}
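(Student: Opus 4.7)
My plan is to argue by induction on the minimal length of an expression
of $\gamma \in \Gamma$ as a word in the symmetric generating set $Y$, where
$u = \leftidx{^\gamma}{s}$ with $s \in S_0$. Before starting the induction I
would arrange $S_0 \subseteq X$; if this does not already hold, I enlarge
$X$ by adjoining $S_0$ and enlarge $R_0'''$ accordingly, which keeps both
finite and does not disturb the preceding definitions. The base case
$\gamma = 1$ is then immediate, since $u = s \in S_0 \subseteq X$, and I can
take $w = u$.

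For the inductive step I would write $\gamma = y\gamma'$ with $y \in Y$ and
$\gamma'$ one shorter, and use the inductive hypothesis to obtain
$w' = x_1^{\epsilon_1} \cdots x_k^{\epsilon_k} \in F(X)$ with
$\leftidx{^{\gamma'}}{s} \equiv w'$. The key observation is that $R' = \Gamma
\cdot R_0'$ is $\Gamma$-invariant by construction, so the $\Gamma$-action on
$F(S)$ descends to the quotient $F(S)/\Norm{R'}$ and in particular preserves
$\equiv$. Applying $y$ to both sides would then give
\[u = \leftidx{^\gamma}{s} \equiv \leftidx{^y}{w'} = (\leftidx{^y}{x_1})^{\epsilon_1} \cdots (\leftidx{^y}{x_k})^{\epsilon_k},\]
and the relations in $R_0'''$ do exactly what they were built for: each
$(\leftidx{^y}{x_i})(w_{y,x_i})^{-1}$ lies in $R_0''' \subseteq R'$, so
$\leftidx{^y}{x_i} \equiv w_{y,x_i} \in F(X)$. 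Substituting letter by letter
produces the desired word in $F(X)$ equivalent to $u$.

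I do not anticipate any real obstacle here; once one notices that the
$\Gamma$-action descends to $F(S)/\Norm{R'}$, the induction essentially runs
itself. I also note that the relations $R_0''$ play no role in this claim---they
are evidently being reserved for a later step that uses Claim \ref{claimsb:1}
together with $R_0''$ to rewrite a general relation $sts^{-1}
(\leftidx{^s}{t})^{-1} \in R_{\conj}$ as a consequence of $R'$, thereby
verifying~$(\dagger)$.
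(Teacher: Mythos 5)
Your proof is correct and follows essentially the same route as the paper: induction on the $Y$-word length of $\gamma$, using $\Gamma$-invariance of $R'$ and the relations $R_0'''$ to rewrite each $\leftidx{^y}{x_i}$ back into $F(X)$. Your preliminary step of arranging $S_0 \subseteq X$ is a sensible (and harmless) precaution that makes the base case genuinely trivial, a point the paper passes over silently.
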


\vspace{-\baselineskip}
There exists some $u_0 \in S_0$ and $\gamma \in \Gamma$ such that 
\[u = \leftidx{^{\gamma}}{u_0}.\]
The proof of the claim will be 
by induction on the length of the shortest word in the generating set $Y$ for $\Gamma$ needed to write $\gamma$.  The
base case where that word has length $0$ is trivial, so assume that it has positive length.  Using the
fact that $Y$ is symmetric, we can write $\gamma = y \gamma'$, where $y \in Y$ and $\gamma'$ can be 
written as a shorter word than $\gamma$.  Our inductive hypothesis say that there exists some
$w' \in F(X)$ such that
\[\leftidx{^{\gamma'}}{u_0} \equiv w'.\]
Since the relations in $R'$ are closed under the action of $\Gamma$, this implies that
\[u = \leftidx{^{y \gamma'}}{u_0} \equiv \leftidx{^y}{w'}.\]
Now write $w = x_1^{e_1} \cdots x_n^{e_n}$ with $x_i \in X$ and $e_i \in \{\pm 1\}$.  We then have
\[\leftidx{^y}{w'} = \left(\leftidx{^y}{x_1}\right)^{e_1} \left(\leftidx{^y}{x_2}\right)^{e_2} \cdots \left(\leftidx{^y}{x_n}\right)^{e_n}.\]
Using the relations in $R_0''' \subset R'$, we see that each term on the right hand side of the previous equation
is equivalent modulo the relations in $R'$ to a word in $F(X)$.  The claim follows.

\begin{claimsb}
\label{claimsb:2}
For all $s_0 \in S_0$ and $u \in S$, we have
\begin{equation}
\label{eqn:claimsb2.1}
s_0 u s_0^{-1} \equiv \leftidx{^{s_0}}{u}.
\end{equation}
\end{claimsb}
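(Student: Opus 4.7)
The plan is to use Claim \ref{claimsb:1} to replace $u$ by a word in the finite generating set $X$, and then to exploit two different ways that conjugation by $s_0$ shows up: as literal conjugation in $F(S)$, and as the $\Gamma$-action on the letters.

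First, I would invoke Claim \ref{claimsb:1} to produce a word $w = x_1^{e_1} \cdots x_n^{e_n} \in F(X)$ (with each $x_i \in X$ and $e_i \in \{\pm 1\}$) such that $u \equiv w$. Since $\equiv$ is equivalence modulo the normal closure $\Norm{R'}$ of $R'$ in $F(S)$, and $\Norm{R'}$ is normal, conjugating by $s_0$ preserves equivalence:
\[s_0 u s_0^{-1} \equiv s_0 w s_0^{-1} = (s_0 x_1 s_0^{-1})^{e_1} \cdots (s_0 x_n s_0^{-1})^{e_n},\]
the last equality being an identity in the free group $F(S)$ (the pairs $s_0^{-1} s_0$ between consecutive letters cancel in $F(S)$). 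Now each factor $s_0 x_i s_0^{-1}$ can be replaced by the single letter $\leftidx{^{s_0}}{x_i} \in S$ using the relation $s_0 x_i s_0^{-1} (\leftidx{^{s_0}}{x_i})^{-1} \in R_0'' \subset R'$, yielding
\[s_0 u s_0^{-1} \equiv \bigl(\leftidx{^{s_0}}{x_1}\bigr)^{e_1} \cdots \bigl(\leftidx{^{s_0}}{x_n}\bigr)^{e_n}.\]

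Next, since $R' = \Gamma \cdot R_0'$ is by construction closed under the $\Gamma$-action on $F(S)$, the set $\Norm{R'}$ is also $\Gamma$-invariant, so applying the action of $s_0 \in G \subset \Gamma$ to both sides of $u \equiv w$ gives
\[\leftidx{^{s_0}}{u} \equiv \leftidx{^{s_0}}{w} = \bigl(\leftidx{^{s_0}}{x_1}\bigr)^{e_1} \cdots \bigl(\leftidx{^{s_0}}{x_n}\bigr)^{e_n},\]
where the second equality is just the definition of the $\Gamma$-action on the free group $F(S)$ (letter-by-letter on $S$). Combining the two displayed equivalences yields $s_0 u s_0^{-1} \equiv \leftidx{^{s_0}}{u}$, which is exactly \eqref{eqn:claimsb2.1}.

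There is no real obstacle here; the proof is short and the only conceptual point is to recognize that the two different meanings of ``conjugation by $s_0$''—namely the word-level operation $w \mapsto s_0 w s_0^{-1}$ in $F(S)$, and the $\Gamma$-action $w \mapsto \leftidx{^{s_0}}{w}$ induced from the action on $S$—must coincide modulo $R'$ once we have normalized $u$ to a word in the finitely many letters $X$, since for those letters the coincidence is built into $R_0''$. The finiteness of $X$ and $Y$ played its role already in Claim \ref{claimsb:1}; here we simply leverage that reduction together with the normality and $\Gamma$-invariance of $\Norm{R'}$.
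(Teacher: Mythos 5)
Your proof is correct and follows essentially the same route as the paper's: reduce $u$ to a word in $F(X)$ via Claim \ref{claimsb:1}, split the literal conjugate $s_0 w s_0^{-1}$ into factors $s_0 x_i s_0^{-1}$ handled by $R_0''$, and match the result against $\leftidx{^{s_0}}{w}$ using the $\Gamma$-invariance of $R'$. The only (cosmetic) difference is that you correctly cite $R_0'' \subset R'$ where the paper has a small typo ($R_0'' \subset R$).
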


\vspace{-\baselineskip}
Using Claim \ref{claimsb:1}, we can find a word $w \in F(X)$ such that 
\begin{equation}
\label{eqn:claimsb2.2}
u \equiv w.
\end{equation}
Since the relations in $R'$ are closed under the action of $\Gamma$, this implies that
\begin{equation}
\label{eqn:claimsb2.3}
\leftidx{^{s_0}}{u} \equiv \leftidx{^{s_0}}{w}.
\end{equation}
Write $w = x_1^{e_1} \cdots x_n^{e_n}$ with $x_i \in X$ and $e_i \in \{\pm 1\}$.  The equation
\eqref{eqn:claimsb2.1} can then be deduced as follows:
\begin{align*}
s_0 u s_0^{-1} &\equiv s_0 \left(x_1^{e_1} x_2^{e_2} \cdots x_n^{e_n}\right) s_0^{-1} \\
&= \left(s_0 x_1 s_0^{-1}\right)^{e_1} \left(s_0 x_2 s_0^{-1}\right)^{e_2} \cdots \left(s_0 x_n s_0^{-1}\right)^{e_n} \\
&\equiv \left(\leftidx{^{s_0}}{x_1}\right)^{e_1} \left(\leftidx{^{s_0}}{x_2}\right)^{e_2} \cdots \left(\leftidx{^{s_0}}{x_n}\right)^{e_n} \\
&= \leftidx{^{s_0}}{\left(x_1^{e_1} x_2^{e_2} \cdots x_n^{e_n}\right)} \\
&= \leftidx{^{s_0}}{w} \\
&\equiv \leftidx{^{s_0}}{u}.
\end{align*}
Here the first $\equiv$ is \eqref{eqn:claimsb2.2}, the second $\equiv$ uses the relations in $R_0'' \subset R$, and
the third $\equiv$ is \eqref{eqn:claimsb2.3}.

\begin{claimsb}
\label{claimsb:3}
For all $s,t \in S$, we have
\begin{equation}
\label{eqn:claimsb3.1}
s t s^{-1} \equiv \leftidx{^s}{t}.
\end{equation}
\end{claimsb}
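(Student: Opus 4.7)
The plan is to deduce Claim \ref{claimsb:3} from Claim \ref{claimsb:2} by a $\Gamma$-equivariance argument. Claim \ref{claimsb:2} already handles the case where the outer conjugating element lies in $S_0$, so the only remaining content is to upgrade the statement from $s \in S_0$ to an arbitrary $s \in S = \Gamma \cdot S_0$. Since $R' = \Gamma \cdot R_0'$, the set $R'$ of relations is $\Gamma$-invariant by construction, and hence the equivalence relation $\equiv$ on $F(S)$ is preserved under the $\Gamma$-action: if $h \equiv k$, then $\leftidx{^\gamma}{h} \equiv \leftidx{^\gamma}{k}$ for any $\gamma \in \Gamma$.

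With this in hand, the actual step is straightforward. Given $s \in S$, write $s = \leftidx{^\gamma}{s_0}$ with $s_0 \in S_0$ and $\gamma \in \Gamma$. For an arbitrary $t \in S$, set $u = \leftidx{^{\gamma^{-1}}}{t} \in S$, apply Claim \ref{claimsb:2} to the pair $(s_0, u)$ to obtain
\[s_0 \, u \, s_0^{-1} \equiv \leftidx{^{s_0}}{u},\]
and then act on both sides by $\gamma$. After simplifying $\leftidx{^\gamma}{s_0} = s$, $\leftidx{^\gamma}{u} = t$, and $\leftidx{^\gamma}{(\leftidx{^{s_0}}{u})} = \leftidx{^{\gamma s_0 \gamma^{-1}}}{(\leftidx{^\gamma}{u})} = \leftidx{^s}{t}$, the desired equivalence $sts^{-1} \equiv \leftidx{^s}{t}$ drops out.

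There is no real obstacle here: the claim is essentially a bookkeeping corollary of Claim \ref{claimsb:2} combined with the built-in $\Gamma$-invariance of $R'$. The only subtlety worth a sentence in the write-up is that the $\Gamma$-action on words in $F(S)$ commutes with the substitution of single-letter generators for conjugates, which is forced by the fact that $S$ is itself a union of $\Gamma$-orbits. Once Claim \ref{claimsb:3} is established, it certifies condition ($\dagger$), and hence $R_0' = R_0'' \cup R_0'''$ together with the original $R_0$ provides the desired finite $\Gamma$-equivariant presentation for $G$.
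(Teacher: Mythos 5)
Your argument is correct and is essentially the same as the paper's proof: both write $s = \leftidx{^\gamma}{s_0}$, set $u = \leftidx{^{\gamma^{-1}}}{t}$, apply Claim \ref{claimsb:2} to the pair $(s_0,u)$, and then push the resulting equivalence forward by $\gamma$ using the $\Gamma$-invariance of $R'$. No gaps.
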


\vspace{-\baselineskip}
We can write $s = \leftidx{^\gamma}{s_0}$ for some $\gamma \in \Gamma$
and $s_0 \in S_0$.  Define $u = \leftidx{^{\gamma^{-1}}}{t}$.  Claim \ref{claimsb:2} implies that
\begin{equation}
\label{eqn:claimsb3.2}
s_0 u s_0^{-1} \equiv \leftidx{^{s_0}}{u}
\end{equation}
The relations in $R'$ are closed under the action of $\Gamma$, so we can apply $\gamma$ to both sides
of \eqref{eqn:claimsb3.2} and deduce that
\begin{equation}
\label{eqn:claimsb3.3}
\leftidx{^{\gamma}}{\left(s_0 u s_0^{-1}\right)} \equiv \leftidx{^{\gamma s_0}}{u}.
\end{equation}
The left hand side of \eqref{eqn:claimsb3.3} is
\begin{equation}
\label{eqn:claimsb3.4}
\leftidx{^{\gamma}}{\left(s_0 u s_0^{-1}\right)} = \left(\leftidx{^\gamma}{s_0}\right) \left(\leftidx{^{\gamma \gamma^{-1}}}{t}\right) \left(\leftidx{^\gamma}{s_0}\right)^{-1} = s t s^{-1},
\end{equation}
while the right hand side of \eqref{eqn:claimsb3.3} is
\begin{equation}
\label{eqn:claimsb3.5}
\leftidx{^{\gamma s_0}}{u} = \leftidx{^{\gamma s_0 \gamma^{-1}}}{t} = \leftidx{^s}{t};
\end{equation}
here we are using the fact that $\Gamma$ acts on $G$ by conjugation, so
\[\gamma s_0 \gamma^{-1} = \leftidx{^\gamma}{s_0} = s\]
in $G$.  
Since \eqref{eqn:claimsb3.4} and \eqref{eqn:claimsb3.5} are also the left and right hand sides of \eqref{eqn:claimsb3.1}, we
conclude that \eqref{eqn:claimsb3.1} holds, as desired.

Claim \ref{claimsb:3} was precisely ($\dagger$) above, which as we noted implies that
$(S_0,R_0 \cup R_0')$ is a finite $\Gamma$-equivariant presentation of $G$.  The theorem follows.
\end{proof}

\begin{footnotesize}
\begin{tabular*}{\linewidth}[t]{@{}p{\widthof{Department of Mathematics}+0.5in}@{}p{\linewidth - \widthof{Department of Mathematics} - 0.5in}@{}}
{\raggedright
Martin Kassabov\\
Department of Mathematics\\ 
Malott Hall\\ 
Cornell University\\ 
Ithaca, New York 14850\\
{\tt kassabov@math.cornell.edu}}
&
{\raggedright
Andrew Putman\\
Department of Mathematics\\
University of Notre Dame \\
255 Hurley Hall\\
Notre Dame, IN 46556\\
{\tt andyp@nd.edu}}
\end{tabular*}\hfill
\end{footnotesize}

\end{document}